\newcommand{\dd}{\mathop{}\!\mathrm{d}}
\newcommand{\dv}[2]{\frac{\dd #1}{\dd #2}}
\let\epsilon\varepsilon
\newtheorem{thm}{Theorem}[section]
\newtheorem{lem}[thm]{Lemma}
\newtheorem{prop}[thm]{Proposition}
\theoremstyle{definition}
\newtheorem{defn}[thm]{Definition}  
\theoremstyle{remark}
\newtheorem{rem}[thm]{Remark}
\numberwithin{equation}{section}
\newcommand{\Holder}{H\"older\xspace}
\newcommand{\HKR}{Hmidi--Keraani--Rousset}
\newcommand{\WX}{Wu--Xue}
\newcommand{\myR}{\mathcal{R}_{2\alpha}}
\newcommand{\functioninput}{\bullet}
\newcommand{\coloneq}{\mathrel{\mathop:}=}
\begin{document}

\title{Temperature patches for the subcritical Boussinesq--Navier--Stokes System with no diffusion}
\author{Calvin Khor\footnote{Corresponding author.}
  \     and  
  Xiaojing Xu}
\date{}
\maketitle
\abstract{In this paper, we prove that temperature patch solutions to the subcritical Boussinesq--Navier--Stokes System with no diffusion preserve the \Holder regularity of their boundary for all time, which generalises the previously known result by F. Gancedo and E. Garc\'ia-Ju\'arez [\emph{Annals of PDE}, 3(2):14, 2017] to the full range of subcritical viscosity. }\\[1em]
\textbf{Keywords}  Boussinesq--Navier--Stokes System,  temperature patch,  subcritical dissipation, global existence.   \\
\textbf{MSC Classification} 35R05 (Primary) 35Q35, 35F25,  35A01, 35A02 (Secondary)

\section{Introduction}This paper studies temperature patch solutions of the following initial-value problem for the subcritical Boussinesq--Navier--Stokes equations, $\alpha \in (\frac12,1)$:
\begin{align}
\left\{\begin{array}{rcll}
    u_t + u\cdot \nabla u + \Lambda^{2\alpha} u &=& \nabla p + \theta e_2 , & (t,x)\in\mathbb R_+ \times \mathbb R^2,\\
    \nabla\cdot u &=& 0, \\
    \theta_t + u\cdot \nabla \theta &=& 0, \\
    u|_{t=0} &=& u_0,\\
    \theta|_{t=0} &=& \theta_0.
\end{array}\right.\label{the-eqn}
\end{align}
Here, $\Lambda^{2\alpha}=(-\Delta)^\alpha$ is a fractional Laplacian  on $\mathbb R^2$ defined initially for Schwartz functions as a Fourier multiplier $\widehat{\Lambda^{2\alpha}f}(\xi) = |2\pi \xi|^{2\alpha}\hat f(\xi)$. Also,  $e_2 := (0,1)^T\in\mathbb R^2$,  $u=u(t,x)=(u_1(t,x),u_2(t,x))^T \in \mathbb R^2$ is the velocity of the fluid, $p=p(t,x)\in\mathbb R$ is the pressure of the fluid, $\theta=\theta(t,x)\in\mathbb R$ is the temperature of the fluid, and $u_0,\theta_0$ are the initial data of the system. 

 This active scalar transport system for $\theta$ arises as a natural generalisation of the Boussinesq--Navier--Stokes system, where the dissipation is the full Laplacian (corresponding to $\alpha = 1$). Details of the physics described by the Boussinesq--Navier--Stokes system can be found in \cite{pedlosky1971geophysical}. Actually, \eqref{the-eqn} is a special case of a two parameter family of equations where there is also a diffusion term $\Lambda^{2\beta}\theta$ in the temperature equation; see for instance the papers \cite{xu2010global}, \cite{miao2011global}, \cite{stefanov2019global}, \cite{wu2014wellposedness}, 
  and citations within. Only local results for the zero viscosity and zero diffusion case are known, such as those in \cite{chae_nam_1997}. 
 
 In addition, some authors consider the opposite situation, where $\alpha = 0$ and $\beta>0$ (called the Euler--Boussinesq system) such as \cite{hmidi2010EBfracdissipation}, \cite{hmidi2010globalEBS}, and \cite{wuxue2012global}, but to the best of the authors' knowledge, there are no other papers currently available studying temperature patches for \eqref{the-eqn} with $\alpha\in(\frac12,1)$. 
 
 We consider solutions in the following sense:
 \begin{defn}\label{defn-solution}
We say that $(u,\theta)\in L^2_{\text{loc}}([0,\infty]\times \mathbb R^2;\mathbb R^2)\times L^2_{\text{loc}}([0,\infty]\times \mathbb R^2)$ with $\nabla\cdot u = 0$ is a solution to \eqref{the-eqn} if for every divergence-free $\phi\in C^\infty_c((0,T)\times \mathbb R^2;\mathbb R^2)$, and for every $\psi\in C^\infty_c((0,T)\times \mathbb R^2),$
\begin{align}
    \int_0^\infty \int_{\mathbb R^2} u\cdot  \phi_t + u\otimes u : \nabla \phi - u \cdot \Lambda^{2\alpha} \phi - \theta \phi\cdot e_2  \dd x \dd t = 0,\\
    \int_0^\infty \int_{\mathbb R^2} \theta \psi_t + \theta u \cdot \nabla \psi \dd x \dd t = 0.
\end{align}
\end{defn}

 By a `temperature patch solution', we mean a solution of \eqref{the-eqn} where $\theta$ is an indicator function for each time $t\ge 0$. These are similar to the sharp front solutions of the SQG equation and their variants which have been and are extensively studied 
 \cite{rodrigo2004vortex},  \cite{rodrigo2005evolution}, \cite{fefferman2011analytic}, \cite{fefferman2012almost}, \cite{fefferman2012spine}, \cite{gancedo2008existence}, \cite{gancedo2016arXiv160506663C},  \cite{chae2011inviscid}, \cite{chae2012generalized}, \cite{khor2020local}, \cite{khor2020sharp}, \cite{hunter2018global}, \cite{hunter2018local}, \cite{hunter2018regularized}, \cite{hunter2019contour}, 
 and the more classical theory of vortex patches for the 2D Euler equation as laid out in \cite{majda2002}, and also \cite{constantinwu1995inviscid}, \cite{constantinwu1996inviscid}, and \cite{JIU20121367}. The Boussinesq system also supports initial data of `Yudovich' type. We mention a recent paper  \cite{melkemi2020local} working on local-in-time Yudovich solutions to the full inviscid equation. The case of critical diffusion was studied in \cite{zerguine2015regular}. Similar problems can be and have been studied for other systems with a transport equation with no diffusion \cite{cordobaMR2753607}, \cite{gancedo2019global} \cite{gancedo2018globalInhomNS}, and multiple patch solutions for SQG have also been studied \cite{hunter2019twofront}, \cite{gancedo2018arXiv181100530G}. Finally we mention some recent work \cite{gancedo2020regularity} on piecewise H\"older solutions to the 3D analogue of the system \eqref{the-eqn} with $\alpha=1$.
 
Global unique classical solutions to \eqref{the-eqn} with $\alpha=1$  were shown to exist in $H^k$ spaces in \cite{chae2006global} and \cite{hou05globalwell-posedness}; global (weak) solutions in $(L^2\cap B^{-1}_{\infty,1})\times  B^0_{2,1}$ were studied in \cite{abidi2017global}, and global (weak) solutions in $L^2\times L^2$ were proven to exist in \cite{hmidi2007zerodiff} with uniqueness proven in \cite{danchin2008theoremesdeLeray}. In the case of temperature patches, Abidi and Hmidi \cite{abidi2007JDE} proved the persistence of $C^1$ regularity of the boundary, and Danchin and Zhang \cite{danchin2017geometrical} improved this result to $C^{1+\epsilon}$ regularity. In Gancedo and Garc\'ia--Ju\'arez \cite{gancedo2017global}, a second proof of Danchin and Zhang's result was given, and they improved the result to $C^{2+\epsilon}$ persistence, in particular implying that the curvature of the patch remains bounded.

In \cite{hmidi2009global}, the critical equation $\alpha=1/2$ has been studied in a low regularity scenario. However, the well-posedness result there requires initial data $\theta_0\in B^0_{\infty,1}\hookrightarrow C^0_b$ which falls short of allowing $L^\infty$ data, of which temperature patches are a special case. Our work shows that this seems to be a special feature of the critical $\alpha=1/2$ case, as $L^\infty$ data is allowed for all $\alpha\in(1/2,1].$ This is a curious property, as the the transport evolution of $\theta$ gives that weak solutions cannot increase $L^p$ norms of $\theta$. At the same time, there was some foreshadowing, as the method of Gancedo and Garc\'ia-Ju\'arez which relies on the explicit formula for the heat kernel can only be replicated for the critical case $\alpha=1/2$, and $e^{-t\Lambda}\nabla f $ has roughly the same regularity as $f$ (as opposed to $e^{t\Delta}\nabla f$ being better behaved than $f$: see \cite{gancedo2017global} for details.)

Our main technical result is the following existence and uniqueness result:
\begin{thm}\label{main}
    Suppose that $\alpha \in (\frac12, 1)$,  $u_0 \in H^1\cap \dot{W}^{1,p}$ for some $p\in(2,\infty)$, $\nabla\cdot u_0=0$, and $\theta_0\in L^1\cap L^\infty$. Then there is a unique global solution $(u,\theta)$ in the sense of Definition \ref{defn-solution} to \eqref{the-eqn} such that for each $\rho\in[1, \min(2,p/2))$ and for each $\alpha'\in[0 , \alpha)$,
    \begin{align}
        u\in L^\infty_t W^{1,p} \cap L^{\rho}_t C^{2\alpha'},\ \text{ and } \ 
        \theta \in L^\infty_t L^1\cap L^\infty.
    \end{align}
\end{thm}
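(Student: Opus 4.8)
The plan is to combine uniform a priori estimates with a compactness argument for existence, and a Gr\"onwall-type energy estimate for uniqueness. I would first construct smooth approximate solutions $(u^n,\theta^n)$ by mollifying the data and the nonlinearity (or via a Friedrichs/Galerkin scheme), for which short-time classical solvability is standard, and then derive bounds uniform in $n$ that both prevent blow-up (yielding global existence of the approximations) and furnish the compactness needed to pass to the limit.

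The backbone is three families of a priori estimates. First, testing the velocity equation against $u$ and using $\nabla\cdot u=0$ to kill the transport and pressure terms gives
\begin{align}
\tfrac12\tfrac{d}{dt}\|u\|_{L^2}^2 + \|\Lambda^\alpha u\|_{L^2}^2 \le \|\theta_0\|_{L^2}\|u\|_{L^2},
\end{align}
so that $u\in L^\infty_t L^2\cap L^2_t\dot H^\alpha$ with at most linear-in-time growth, using $\theta_0\in L^1\cap L^\infty\hookrightarrow L^2$. Second, since $\theta$ solves a pure transport equation along the divergence-free field $u$, all its Lebesgue norms are conserved, giving $\theta\in L^\infty_t(L^1\cap L^\infty)$ for free once the flow is shown to be well defined. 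The third and decisive estimate is the $W^{1,p}$ bound on $u$. Taking the curl reduces matters to the scalar vorticity equation
\begin{align}
\omega_t + u\cdot\nabla\omega + \Lambda^{2\alpha}\omega = \partial_1\theta, \qquad u = \nabla^\perp\Lambda^{-2}\omega,
\end{align}
and by Calder\'on--Zygmund theory $\|\nabla u\|_{L^p}\simeq\|\omega\|_{L^p}$ for $1<p<\infty$. The obstruction is that for a patch $\theta$ the forcing $\partial_1\theta$ is only a measure on the boundary, so it cannot be paired directly with $|\omega|^{p-2}\omega$.

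I would instead work from the Duhamel formula
\begin{align}
\omega(t) = e^{-t\Lambda^{2\alpha}}\omega_0 + \int_0^t e^{-(t-s)\Lambda^{2\alpha}}\partial_1\theta(s)\dd s - \int_0^t e^{-(t-s)\Lambda^{2\alpha}}\nabla\cdot(u\omega)(s)\dd s,
\end{align}
moving the derivative onto the semigroup via the smoothing bound $\|e^{-\tau\Lambda^{2\alpha}}\partial_1 f\|_{L^p}\lesssim \tau^{-1/(2\alpha)}\|f\|_{L^p}$. This is exactly where subcriticality enters: $\alpha>\tfrac12$ forces $1/(2\alpha)<1$, so $\int_0^t(t-s)^{-1/(2\alpha)}\dd s<\infty$ and the forcing contributes a finite $\|\theta_0\|_{L^p}\,t^{1-1/(2\alpha)}$. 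The transport term is handled by $\|u\omega\|_{L^p}\le\|u\|_{L^\infty}\|\omega\|_{L^p}$ together with the two-dimensional bound $\|u\|_{L^\infty}\lesssim\|u\|_{L^2}+\|\nabla u\|_{L^p}$ valid for $p>2$, closing a Gr\"onwall inequality for $\|\omega(t)\|_{L^p}$ against the energy bound. Reading the same Duhamel formula with the $C^{2\alpha'}$-smoothing of $e^{-\tau\Lambda^{2\alpha}}$ and optimizing the time-integrability produces the parabolic-gain estimate $u\in L^\rho_t C^{2\alpha'}$, the thresholds $\rho<\min(2,p/2)$ and $\alpha'<\alpha$ being precisely the ranges in which the resulting powers of $\tau$ remain integrable.

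With these uniform bounds in hand, existence follows by extracting, via Aubin--Lions (using the equation to control $\partial_t u^n$ in a negative-order space), a subsequence converging strongly enough in space-time to pass to the limit in the weak formulation of Definition \ref{defn-solution}; strong convergence of $u^n$ is what permits taking the limit in the transport nonlinearity $\theta^n u^n$. For uniqueness I would estimate the difference of two solutions: the velocity difference obeys a forced fractional Navier--Stokes equation closed by an $L^2$ energy estimate and Gr\"onwall, while the temperature difference is controlled because $u\in L^\rho_t C^{2\alpha'}$ with $2\alpha'>1$ (attainable since $\alpha>\tfrac12$) renders $u$ spatially Lipschitz for a.e.\ time, so the flow is unique and transports $\theta$ unambiguously. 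I expect the main obstacle to be the global-in-time closure of the $W^{1,p}$ estimate: the transport contribution is quadratic, so one must exploit the energy dissipation and the integrable time-singularity carefully to rule out finite-time blow-up of $\|\omega\|_{L^p}$.
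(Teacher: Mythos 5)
The decisive gap is the one you flag in your final sentence but do not resolve: your $W^{1,p}$ estimate is not globally closable as written, and everything else (the \Holder gain, global existence, the uniqueness class) sits on top of it. Putting both $\partial_1\theta$ and $\nabla\cdot(u\omega)$ into Duhamel and moving the derivatives onto the semigroup gives, after $\|u\|_{L^\infty}\lesssim\|u\|_{L^2}+\|\nabla u\|_{L^p}$ and Calder\'on--Zygmund,
\begin{align}
\|\omega(t)\|_{L^p}\lesssim \|\omega_0\|_{L^p}+t^{1-\frac1{2\alpha}}\|\theta_0\|_{L^p}
+\int_0^t (t-s)^{-\frac1{2\alpha}}\bigl(\|u(s)\|_{L^2}+\|\omega(s)\|_{L^p}\bigr)\|\omega(s)\|_{L^p}\dd s,
\end{align}
which is \emph{quadratic} in $\|\omega\|_{L^p}$. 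A weakly singular Gronwall inequality of this type is the integral analogue of $\dot X\lesssim X^{2}$: it yields bounds only up to a finite time determined by the size of the data, and neither the energy dissipation (in 2D, $L^2_t\dot H^\alpha$ with $\alpha<1$ does not control $\|u\|_{L^\infty}$) nor the integrability of the kernel removes the quadratic growth; there is no smallness anywhere to exploit. The paper's core idea, which is exactly the device missing from your proposal, is the change of unknown $\Gamma=\omega-\myR\theta$ with $\myR=\Lambda^{1-2\alpha}\mathcal R_1$: since $\partial_1\theta=\Lambda^{2\alpha}\myR\theta$, the patch forcing is absorbed into the dissipation and replaced by the commutator $[\myR,u\cdot\nabla]\theta$, which by Proposition \ref{wx-prop42} is bounded by $\|\nabla u\|_{L^p}(\|\theta\|_{B^{1-2\alpha}_{\infty,1}}+\|\theta\|_{L^2})$ --- \emph{linear} in $\|\omega\|_{L^p}$ with coefficients given by conserved norms of $\theta$ --- while the transport term $u\cdot\nabla\Gamma$ stays inside the evolution, where the C\'ordoba--C\'ordoba $L^p$ estimate renders it harmless (Proposition \ref{thm-omega-lp-estimate}). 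This converts your quadratic inequality into a linear one, and that is what makes the bound global.

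Two further discrepancies. First, even locally your Duhamel route does not give the claimed range of $(\rho,\alpha')$: the forcing contributes $\|e^{-\tau\Lambda^{2\alpha}}\partial_1\theta\|_{B^{2\alpha'-1}_{\infty,1}}\lesssim \tau^{-\alpha'/\alpha}\|\theta\|_{L^\infty}$, which lies in $L^\rho(0,t)$ only when $\rho\,\alpha'<\alpha$, a coupled constraint; you cannot have $\alpha'$ near $\alpha$ and $\rho$ near $\min(2,p/2)$ simultaneously, so the thresholds are not ``precisely'' reproduced by integrability of powers of $\tau$. The paper instead derives the gain from the smoothing estimate for $\Gamma$ in $\widetilde L^\rho_t B^{4\alpha/r}_{r,1}$ (Theorem \ref{hkr-6.3}) together with the fact that $\myR$ itself gains $2\alpha-1$ derivatives on $\theta\in L^\infty$ (Proposition \ref{prop-hkr66}), which yields the larger region claimed in Theorem \ref{main}. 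Second, your uniqueness sketch cannot run in $L^2$: the temperature difference is forced by $\delta u\cdot\nabla\theta^1$, and for a patch $\nabla\theta^1$ is a measure, so this term must be handled as $\nabla\cdot(\delta u\,\theta^1)$ in a negative-order space ($B^{-1}_{2,\infty}$ in Theorem \ref{thm-uniqueness}); the product estimates available there (Lemmas \ref{lem-610-2} and \ref{lem-log-interpol}) lose a logarithm, which is why the paper needs the Osgood lemma (Lemma \ref{lem-osgood}) rather than Gronwall, and the explicit Osgood bound (Remark \ref{rem-explicit-bd}) then doubles as the Cauchy-sequence estimate that replaces your Aubin--Lions step. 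Uniqueness of the flow of a Lipschitz field does not by itself provide a quantitative stability estimate between two different solution pairs, which is what the coupled system requires.
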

Using Theorem \ref{main}, for any $\alpha'<\alpha$, we can control the $C^{2\alpha'}$ boundary regularity of temperature patch solutions to \eqref{the-eqn}:
\begin{thm}\label{cor-Boundary-reg-intro-ver}
Suppose that $\alpha\in(\frac12 , 1)$, $u_0\in H^1\cap \dot{W}^{1,p}$ for some $p\in(2,\infty]$, $\nabla\cdot u_0=0$, and $\theta_0 = \mathbf 1_{D_0}$ be an indicator function of a simply connected set $D_0$ with $D_0\in C^{2\alpha'}$ for some $\alpha'<\alpha$. Then the global solution  $\theta$ to \eqref{the-eqn} remains an indicator, $\theta(t)= \mathbf 1_{D(t)}$ where $\partial D(t) \in L^\infty_t C^{2\alpha'}$ for all $t\ge 0$.
\end{thm}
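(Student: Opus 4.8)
The plan is to realise $\theta$ as the push-forward of $\theta_0$ along the flow of $u$ and then read off the boundary regularity from the flow map, every ingredient being furnished by Theorem \ref{main}. First I would fix some $\alpha''\in(\tfrac12,\alpha)$ with $2\alpha''>1$ and apply Theorem \ref{main} with $\rho=1$ to obtain $u\in L^1_t C^{2\alpha''}$, so that $\nabla u\in L^1_t C^{2\alpha''-1}\hookrightarrow L^1_t L^\infty$. This time-integrable Lipschitz bound lets me solve the flow ODE $\partial_t X(t,x)=u(t,X(t,x))$, $X(0,x)=x$, by Cauchy--Lipschitz with integrable-in-time Lipschitz constant; since $\nabla\cdot u=0$, each $X(t,\functioninput)$ is a measure-preserving bi-Lipschitz homeomorphism with $\det\nabla X\equiv 1$. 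Because $u$ is Lipschitz in space (for a.e. $t$, with integrable-in-time norm), the transport equation in \eqref{the-eqn} has the unique weak solution $\theta(t,\functioninput)=\theta_0\circ X(t,\functioninput)^{-1}$; by the uniqueness in Theorem \ref{main} this coincides with the solution at hand, so $\theta(t)=\mathbf 1_{D(t)}$ with $D(t)\coloneq X(t,D_0)$ and $\partial D(t)=X(t,\partial D_0)$. As $X(t,\functioninput)$ is a homeomorphism, $D(t)$ remains simply connected.

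Next I would reduce the boundary regularity to regularity of $X(t,\functioninput)$. Writing $\gamma_0\in C^{2\alpha'}$ for a regular parametrisation of $\partial D_0$, the curve $\gamma(t,\functioninput)\coloneq X(t,\gamma_0(\functioninput))$ parametrises $\partial D(t)$, and it suffices to show that $X(t,\functioninput)\in C^{2\alpha''}$ locally uniformly in $t$ together with the non-degeneracy $|\partial_s\gamma(t,s)|\ge c(t)>0$. The latter is immediate: $\partial_s\gamma=(\nabla X)(t,\gamma_0)\,\partial_s\gamma_0$, and since $\det\nabla X\equiv1$ and $\|\nabla X(t)\|_{L^\infty}\le e^{V(t)}$ with $V(t)\coloneq\int_0^t\|\nabla u(s)\|_{L^\infty}\dd s$, the inverse Jacobian is also bounded by $e^{V(t)}$ (in two dimensions $\|A^{-1}\|=\|A\|$ when $\det A=1$), giving $|\partial_s\gamma(t,s)|\ge e^{-V(t)}|\partial_s\gamma_0(s)|$. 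The composition $X(t,\functioninput)\circ\gamma_0$ then lies in $C^{2\alpha'}$ by the standard \Holder composition estimate: the outer map $X(t,\functioninput)$ is at least Lipschitz, and it is itself $C^{2\alpha'}$ in the regime $2\alpha'>1$, so both the \Holder-graph case $2\alpha'\le1$ and the $C^{1,\,2\alpha'-1}$ case $2\alpha'>1$ are covered.

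For the flow-map regularity I would differentiate the flow ODE to get $\partial_t\nabla X=(\nabla u)(t,X)\,\nabla X$. A first Gr\"onwall estimate gives $\|\nabla X(t)\|_{L^\infty}\le e^{V(t)}$. Then, setting $\beta\coloneq 2\alpha''-1\in(0,1)$ and $h(t)\coloneq[\nabla X(t)]_{C^\beta}$, I would split $\nabla X(t,x)-\nabla X(t,y)$ into a term controlled by $[\nabla u(s)]_{C^\beta}\,\|\nabla X(s)\|_{L^\infty}^{1+\beta}$ and a term controlled by $\|\nabla u(s)\|_{L^\infty}\,h(s)$, and close a second Gr\"onwall inequality
\begin{align}
h(t)\le\Big(\int_0^t[\nabla u(s)]_{C^\beta}\,e^{(1+\beta)V(s)}\dd s\Big)\,e^{V(t)}.
\end{align}
The right-hand side is finite because $\nabla u\in L^1_t C^\beta$, so $X(t,\functioninput)\in C^{1+\beta}=C^{2\alpha''}$ with a bound that is locally uniform in $t$. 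Combined with the previous step this yields $\partial D(t)\in C^{2\alpha'}$ locally uniformly, i.e. $\partial D\in L^\infty_t C^{2\alpha'}$.

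The main obstacle is conceptual rather than computational, and it has effectively been resolved already in Theorem \ref{main}: the whole argument hinges on upgrading the velocity to $L^1_t C^{2\alpha''}$ with $2\alpha''>1$, which is exactly the subcritical gain of regularity that degenerates as $\alpha\downarrow\tfrac12$. Within the present proof the only delicate point is that this bound is merely \emph{time-integrable} rather than time-uniform, so one must carry the integrable Lipschitz and \Holder norms of $\nabla u$ through the Gr\"onwall arguments above rather than exploiting a pointwise-in-time bound. The borderline patch regularity $2\alpha'\le1$ needs no extra work, since the flow is always $C^{2\alpha''}$ for some $2\alpha''>1$ and composing a \Holder-$2\alpha'$ curve with a Lipschitz map preserves \Holder-$2\alpha'$ regularity; finally the endpoint $p=\infty$ excluded from Theorem \ref{main} is recovered because $\nabla u_0\in L^2\cap L^\infty\hookrightarrow L^p$ for every finite $p$, so Theorem \ref{main} may be applied with any such $p$.
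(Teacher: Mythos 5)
Your proposal is correct and follows essentially the same route as the paper: transport $\partial D_0$ by the flow of $u\in L^1_t C^{2\alpha''}$ furnished by Theorem \ref{main}, control $\nabla_\xi X$ in $L^\infty$ and in $C^{2\alpha''-1}$ by Gr\"onwall, and conclude by composing with the initial parametrisation $\gamma_0$. You additionally spell out several points the paper leaves implicit (the identification of $\theta(t)$ with the push-forward $\mathbf 1_{X(t,D_0)}$, the non-degeneracy of the transported parametrisation, the borderline case $2\alpha'\le 1$, and the recovery of the $p=\infty$ endpoint), which only strengthens the argument.
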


The main method of proof is the use of the special structure of the equation by the study of $\Gamma := \omega - \myR \theta$, where $\myR$ is a smoothing operator. It turns out that it is easier to study this combination of terms than the vorticity $\omega=\nabla^\perp \cdot u$ by itself. This is the method used in \cite{hmidi2009global}, but since $\alpha>1/2$ the proof is more streamlined.

Theorem \ref{main} raises the following interesting questions: (a) Is it possible to control the curvature for $\alpha\in(1/2,1)$, as it is possible in $\alpha=1$? (b) can the critical equation $\alpha=1$ support unique temperature patch solutions, and what regularity of their boundary is preserved?

The remainder of the paper is organised as follows. In Section \ref{sectionPrelim}, we list the notation that we use for function spaces and inequalities. In Section \ref{sectionMyR}, we explain how introducing the term $\Gamma$ leads to better estimates. In Section \ref{sectionBasic}, we give some easy a priori estimates from the equation obtained by classical means. In Section \ref{sectionAPrioriVort}, we use the equation for $\Gamma$ to derive better a priori estimates for the vorticity (and hence the velocity). In Section \ref{sectionUniqueness}, we use the Osgood Lemma to prove uniqueness of solutions. In Section \ref{sectionExistence}, we use the quantitative bound from the Osgood Lemma to show existence of solutions, and also present the proof for conservation of H\"older regularity of temperature patch boundaries.
\section{Preliminaries}
\label{sectionPrelim}
We follow the notation of \cite{hmidi2009global} as follows.
\begin{itemize}
    \item  We write $\phi_k(t)$ to denote any function of the form
\[ \phi_k(t) = C_0 \underbrace{\exp\exp\dots\exp}_{k\text{ times}}(C_1 t).\]
\item We write $A\lesssim B$ to mean that $|A|\le CB$ for some constant $C$ that does not depend on $A,B,$ or any other variable under consideration. We write $A\lesssim_{\phi_1,\dots,\phi_N} B$ to emphasise that the implicit constant $C$ depends on the $N$ quantities $\phi_1,\dots,\phi_N$.
\item When $(X,\|\functioninput\|_X)$ is a Banach space, we will write for $p\in[1,\infty]$
$$ L^p_t X$$
to denote the Bochner space $L^p(0,t; X)$; a function $f=f(t)$ with values in $X$ is in $L^p_t X$ if
\begin{align}
\|f\|_{L^p_t X} := \left \| \|f(t)\|_X \right\|_{L^p_t} < \infty.    
\end{align}
In a norm, we also abusively adopt the notation that a subscripted variable $q$,  like $\ell^r_q$ below, means that the norm is to be taken with respect to $q$. This should not cause confusion with the above convention for time integrals.
See for instance \cite{evans1998partial} or \cite{rudin2006functional} for details on Bochner spaces.
\end{itemize}
\subsection{Littlewood--Paley Decomposition and Function spaces} We fix our notation for Littlewood--Paley decompositions here; see 
\cite{bahouri2011fourier}, 
\cite{chemin1998perfect}, 
\cite{peetre1976new}, 
\cite{tao2006nonlinear}, 
or 
\cite{sawano2018theory} for more details about Littlewood--Paley, Besov spaces, and Tilde spaces.

There exist two radial non-negative functions $\chi \in \mathcal{D}\left(\mathbb{R}^{d}\right)$ and $\varphi \in \mathcal{D}(\mathbb{R}^{d} \backslash\{0\})$ such that
\begin{enumerate}[(i)]
    \item $\chi(\xi)+\sum_{q \geqslant 0} \varphi\left(2^{-q} \xi\right)=1 ; \forall q \geqslant 1, \operatorname{supp} \chi \cap \operatorname{supp} \varphi\left(2^{-q}\right)=\emptyset$,
    \item  $\operatorname{supp} \varphi\left(2^{-j} \functioninput \right) \cap \operatorname{supp} \varphi\left(2^{-k} \functioninput\right)=\emptyset,$ if $|j-k| \geqslant 2$.
\end{enumerate} 
For every $v \in \mathcal{S}^{\prime}\left(\mathbb{R}^{d}\right)$ we define Fourier multipliers $\Delta_q, S_q$ by
\[
\Delta_{-1} v=\chi(\mathrm{D}) v; \ \forall q \in \mathbb{N}, \ \Delta_{q} v=\varphi\left(2^{-q} \mathrm{D}\right) v; \ \text { and } \ S_{q}v=\!\!   \sum_{-1 \le  p \le q-1}\! \Delta_{p}v.
\]

\begin{lem}[Bernstein Inequalities] There exists $a$ constant $C$ such that for $q, k \in \mathbb{N}, 1 \leqslant a \leqslant b$ and for $f \in L^{a}(\mathbb{R}^{d})$
\begin{align}
    \sup _{|\alpha|=k}\left\|\partial^{\alpha} S_{q} f\right\|_{L^{b}} &\leqslant C^{k} 2^{q\left(k+d\left(\frac{1}{a}-\frac{1}{b}\right)\right)}\left\|S_{q} f\right\|_{L^{a}}, \text{ and}\\
C^{-k} 2^{q k}\left\|\Delta_{q} f\right\|_{L^{a}} &\leq \sup _{|\alpha|=k}\left\|\partial^{\alpha} \Delta_{q} f\right\|_{L^{a}} \le  C^{k} 2^{q k}\left\|\Delta_{q} f\right\|_{L^{a}}.
\end{align}
\end{lem}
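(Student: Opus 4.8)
The plan is to derive both estimates directly from the frequency-localisation that \emph{defines} $\Delta_q$ and $S_q$, using only Young's convolution inequality together with the fact that a Fourier multiplier with a smooth, compactly supported symbol acts by convolution against a rescaled Schwartz kernel. First I record the two support facts: $\widehat{S_q f}$ is supported in a ball $\{|\xi|\le c_1 2^q\}$, while $\widehat{\Delta_q f}$ is supported in an annulus $\{c_0 2^q\le|\xi|\le c_1 2^q\}$ that avoids the origin. The key device is a \emph{reproducing cutoff}: fix once and for all $\tilde\chi\in\mathcal D(\mathbb R^d)$ equal to $1$ on $\{|\eta|\le c_1\}$, and $\tilde\varphi\in\mathcal D(\mathbb R^d\setminus\{0\})$ equal to $1$ on $\{c_0\le|\eta|\le c_1\}$, so that $S_q f = \tilde\chi(2^{-q}\mathrm D)S_q f$ and $\Delta_q f = \tilde\varphi(2^{-q}\mathrm D)\Delta_q f$ for every $q$. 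Setting $h\coloneq\mathcal F^{-1}\tilde\chi$, this gives $S_q f = 2^{qd}h(2^q\cdot) * S_q f$, and similarly for $\Delta_q$.

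For the upper bounds I differentiate under the convolution. Since $\partial^\alpha[2^{qd}h(2^q\cdot)] = 2^{q(d+|\alpha|)}(\partial^\alpha h)(2^q\cdot)$, Young's inequality with $1+\tfrac1b = \tfrac1r + \tfrac1a$ yields
\[
\|\partial^\alpha S_q f\|_{L^b} \le 2^{q|\alpha|}\,\bigl\|2^{qd}(\partial^\alpha h)(2^q\cdot)\bigr\|_{L^r}\|S_q f\|_{L^a} = 2^{q(|\alpha|+d(1/a-1/b))}\,\|\partial^\alpha h\|_{L^r}\,\|S_q f\|_{L^a},
\]
using the scaling identity $\|2^{qd}g(2^q\cdot)\|_{L^r} = 2^{qd(1-1/r)}\|g\|_{L^r}$ and the relation $1-\tfrac1r = \tfrac1a-\tfrac1b$. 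The identical computation with $h$ replaced by $\mathcal F^{-1}\tilde\varphi$ and $b=a$ gives the right-hand inequality of the second display. The factor $C^k$ is produced by the claim $\sup_{|\alpha|=k}\|\partial^\alpha h\|_{L^r}\le C^k$, verified below.

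For the lower bound on the annulus I exploit that $|\xi|$ is bounded below there, so the identity multiplier can be inverted against $k$-fold derivatives. From the multinomial expansion $|2\pi\xi|^{2k} = (2\pi)^{2k}\sum_{|\alpha|=k}\binom{k}{\alpha}\xi^{2\alpha}$ one gets, on the support of $\tilde\varphi(2^{-q}\cdot)$,
\[
\tilde\varphi(2^{-q}\xi) = \sum_{|\alpha|=k} c_\alpha\,(2\pi i\xi)^\alpha\, m_\alpha(\xi),\qquad m_\alpha(\xi)\coloneq \binom{k}{\alpha}\frac{\xi^\alpha}{|\xi|^{2k}}\,\tilde\varphi(2^{-q}\xi),
\]
with $c_\alpha = (2\pi i)^{-k}$. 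Each $m_\alpha$ is smooth, supported in the annulus away from the origin, and homogeneous of degree $-k$ up to the cutoff, so $m_\alpha(\xi) = 2^{-qk}M_\alpha(2^{-q}\xi)$ for a fixed $M_\alpha\in\mathcal D(\mathbb R^d\setminus\{0\})$. Since $(2\pi i\xi)^\alpha$ is the symbol of $\partial^\alpha$ and multipliers commute, this reads $\Delta_q f = 2^{-qk}\sum_{|\alpha|=k} c_\alpha\, M_\alpha(2^{-q}\mathrm D)\,\partial^\alpha\Delta_q f$, and Young's inequality bounds each operator $M_\alpha(2^{-q}\mathrm D)$ on $L^a$ by $\|\mathcal F^{-1}M_\alpha\|_{L^1}$, independently of $q$. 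Collecting the factors gives $C^{-k}2^{qk}\|\Delta_q f\|_{L^a}\le\sup_{|\alpha|=k}\|\partial^\alpha\Delta_q f\|_{L^a}$.

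The only genuinely delicate point, and the one I expect to be the main obstacle, is the uniform, geometric-in-$k$ control of the constants: showing $\sup_{|\alpha|=k}\|\partial^\alpha h\|_{L^r}\le C^k$ and $\sum_{|\alpha|=k}|c_\alpha|\,\|\mathcal F^{-1}M_\alpha\|_{L^1}\le C^k$ with a base $C$ independent of $k$, $q$, $a$, $b$. Here I would use that $\partial^\alpha h = \mathcal F^{-1}[(2\pi i\xi)^\alpha\tilde\chi]$ with $\tilde\chi$ a \emph{fixed} compactly supported bump: the number of multi-indices with $|\alpha|=k$ is polynomial in $k$, the multinomial weights sum to $d^k$, the monomial $(2\pi i\xi)^\alpha$ is bounded on the fixed support by $(C')^k$, and standard integration-by-parts (decay) estimates for the inverse Fourier transform absorb the remaining derivatives at the cost of another geometric factor. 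The analogous bound for $M_\alpha$ must account for the $|\eta|^{-2k}$ factor, which again contributes only geometrically on the fixed annulus. This bookkeeping is routine but must be carried out carefully to keep the constant base independent of all parameters.
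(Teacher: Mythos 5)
Your proof is correct and is exactly the standard argument for this lemma (a reproducing rescaled cutoff plus Young's inequality for the direct estimates, and division by the symbol $\sum_{|\alpha|=k}\binom{k}{\alpha}\xi^{2\alpha}/|\xi|^{2k}$ on the annulus for the reverse one); the paper itself states the lemma without proof, deferring to its cited references (e.g.\ Bahouri--Chemin--Danchin), where precisely this argument appears. The constant bookkeeping you flag at the end is indeed the only delicate point and goes through as you describe, since the number of multi-indices with $|\alpha|=k$ is polynomial in $k$ and all symbol bounds on the fixed ball/annulus are geometric in $k$.
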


\subsubsection{Besov and Tilde Spaces}
The Besov space $B^s_{p,q}(\mathbb R^2)$ for $p,q\in[1,\infty],\ s\in\mathbb R$ is the space of distributions $u$ such that  
\[
\|u\|_{B_{p, r}^{s}}:=\left\|2^{q s}\left\|\Delta_{q} u\right\|_{L^{p}}\right\| _{\ell^r_q}< \infty.
\]
In a norm, a subscripted variable $q$  like $\ell^r_q$ above means that the norm is to be taken with respect to $q$. Since we will only consider functions on $\mathbb R^2$, we will only write $B^s_{p,q}$ (and similarly $L_t^p B^s_{p,q}$ instead of $L_t^p B^s_{p,q}(\mathbb R^2)$). Also write $H^s := B^s_{2,2}$. The Besov spaces trivially satisfy
\begin{align}
    s_1 \ge s_2 &\implies B^{s_1}_{p,r} \hookrightarrow B^{s_2}_{p,\tilde r} ,\quad \forall p,r,\tilde r \in [1,\infty], 
    \\
    r_1 \le r_2 & \implies B^s_{p,r_1} \hookrightarrow B^s_{p,r_2} , \quad \forall s\in\mathbb R,\, p \in [1,\infty],
\end{align}
and there is also the analogue of Sobolev embedding in 2D: 
\begin{align}
    B^{s_0+s}_{p,r} \hookrightarrow B^{s_0}_{p*,r}, \quad \frac1{p_*} = \frac1p - \frac s2, \quad \forall s_0\in\mathbb R,p,r\in[1,\infty].
\end{align}
In addition, we have the embeddings for any $p\in[1,\infty]$,
$$B^0_{p,1} \hookrightarrow L^p \hookrightarrow B^0_{p,\infty}, \quad k\in\mathbb N_0\implies B^k_{p,1} \hookrightarrow W^{k,p} \hookrightarrow B^k_{p,\infty} .$$ 
We say that a function $u=u(x,t)$ belongs to the `Tilde space' $\widetilde L^p_t B^s_{p,q}$ if
\[ \|u\|_{\widetilde L^p_t B^s_{p,r}} := \left\|2^{sq} \left\| \left\| \Delta_q u(x,t) \right\|_{L^p_x}\right\|_{L^p_t} \right\|_{\ell^r_q}. \]
In comparison with
\[ \|u\|_{L^p_t B^s_{p,r}} =  \left\|  \left\| u \right\|_{(B^s_{p,r})_x}  \right\|_{L^p_t} = \left\| \left\| 2^{sq}\left\| \Delta_q u(x,t) \right\|_{L^p_x}\right\|_{\ell^r_q}  \right\|_{L^p_t},\]
From the generalised Minkowski inequality $\big \|\|f\|_{L^p_x}\big \|_{L^q_y} \le \big \|\|f\|_{L^q_y}\big \|_{L^p_x}$ if $q\ge p$ and embeddings between Besov spaces, we have the following relations:
\begin{align}
    L_{T}^{\rho} B_{p, r}^{s} \hookrightarrow \tilde{L}_{T}^{\rho} B_{p, r}^{s} \hookrightarrow L_{T}^{\rho} B_{p, r}^{s-\varepsilon},\ & \text { if } r \ge\rho, 
\\
 L_{T}^{\rho} B_{p, r}^{s+\varepsilon} \hookrightarrow \tilde{L}_{T}^{\rho} B_{p, r}^{s} \hookrightarrow L_{T}^{\rho} B_{p, r}^{s},\ & \text { if } \rho \ge r,
\end{align} 
where $\varepsilon>0$ is arbitrarily small. In particular $\tilde{L}_{T}^{r} B_{p, r}^{s} = L_{T}^{r} B_{p, r}^{s}$.
\section{Study of $\myR$ and some commutators}
\label{sectionMyR}
\subsection{Introduction of the $\Gamma = \omega - \myR \theta$ term}
We use the technique of \cite{hmidi2009global}, \cite{wuxue2012global}, \cite{miao2011global}, and other papers of introducing an auxillary quantity $\Gamma$ determined from the equation, that has better regularity properties than the original functions under consideration. The $\omega:=\nabla^\perp\cdot u$ equation is obtained by applying the $\nabla^\perp\cdot := \binom{-\partial_2}{\partial_1}\cdot $ operator to the first equation of \eqref{the-eqn}. By writing $\partial_1 \theta = \Lambda^{2\alpha}\Theta$, i.e. $\Theta = \myR \theta := \Lambda^{1-2\alpha}\mathcal R_1 \theta$, the $\omega$ equation takes the form
\[ (\partial_t + u\cdot\nabla + \Lambda^{2\alpha} ) \omega - \Lambda^{2\alpha} \Theta = 0 .\]
By adding $-(\partial_t + u\cdot \nabla)\Theta$ to both sides of the equation, we obtain an equation for $\Gamma = \omega - \Theta $ which has a commutator:
\begin{align}
    (\partial_t + u\cdot\nabla + \Lambda^{2\alpha} )\Gamma = [\myR,u\cdot\nabla ]\theta   .
\end{align}
This structure will be key in deriving the a priori estimates for $u$.
An important lemma for the study of the critical dissipation case is Lemma 3.3 in \HKR's paper \cite{hmidi2009global}. The generalisation of Lemma 3.3 (ii) is the following result of \WX{} \cite{wuxue2012global}:
\begin{prop}[Proposition 4.2 of \cite{wuxue2012global}]\label{wx-prop42}
     Let $\beta \in [1,2),(p, r) \in[2, \infty] \times[1, \infty), u $ be a smooth divergence-free vector field of $\mathbb{R}^{n}$ $(n \ge 2) $ with vorticity $ \omega $ and $ \theta $ be a smooth scalar function. Then we have that for every $ s \in( \beta-2, \beta),$
$$
\left\|\left[\mathcal{R}_{\beta}, u \cdot \nabla\right] \theta\right\|_{B_{p, r}^{s}} \lesssim_{s, \beta}\|\nabla u\|_{L^p}\left(\|\theta\|_{B_{\infty, r}^{s+1-\beta}}+\|\theta\|_{L^{2}}\right)
.   $$
Besides, if $p=\infty,$ we also have
$$
\left\|\left[\mathcal{R}_{\beta}, u \cdot \nabla\right] \theta\right\|_{B_{\infty,r}^{s}} \lesssim_{s, \beta}\left(\|\omega\|_{L^{\infty}}+\|u\|_{L^{2}}\right)\|\theta\|_{B_{\infty, r}^{s+1-\beta / 2}}+\|u\|_{L^{2}}\|\theta\|_{L^{2}}.
$$

\end{prop}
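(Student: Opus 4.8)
The plan is to localise in frequency and run Bony's paraproduct decomposition on the commutator, isolating the single piece where the cancellation is genuinely needed and treating the rest by the bare mapping properties of $\mathcal{R}_\beta$. Throughout I write $\mathcal{R}_\beta = \Lambda^{1-\beta}\mathcal{R}_1$, a Fourier multiplier of order $1-\beta\le 0$, and I use that $\Delta_q\mathcal{R}_\beta$ is convolution against a kernel $h_q$ which, by rescaling the fixed profile at unit frequency, obeys the size and first-moment bounds
\[
\|h_q\|_{L^1}\lesssim 2^{q(1-\beta)},\qquad \int_{\mathbb R^n}|x|\,|h_q(x)|\dd x\lesssim 2^{-q\beta}.
\]
First I would apply $\Delta_q$ and decompose each product $u_k\partial_k\theta$ in $\mathcal{R}_\beta(u\cdot\nabla\theta)-u\cdot\nabla\mathcal{R}_\beta\theta$ as $T_{u_k}\partial_k\theta + T_{\partial_k\theta}u_k + R(u_k,\partial_k\theta)$, using $\nabla\cdot u=0$ to recast the remainder in divergence form. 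Grouping the two halves of the commutator along this decomposition yields three families: (I) the low--high commutator $\sum_j[\mathcal{R}_\beta,S_{j-1}u\cdot\nabla]\Delta_j\theta$; (II) the high--low paraproduct carrying $\theta$ at low frequency; and (III) the high--high remainder. Since the symbols $|\xi|^{1-\beta}$ and $\xi_1/|\xi|$ are singular at the origin (and $\mathcal{R}_1$ is unbounded on $L^\infty$), I would peel off the block $\Delta_{-1}$ and estimate it by Bernstein in $L^2$, which is the source of the $\|\theta\|_{L^2}$ correction.

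For the principal term (I) I would use, for $j\sim q$, the kernel representation
\[
\Delta_q[\mathcal{R}_\beta,S_{j-1}u\cdot\nabla]\Delta_j\theta(x) = \int h_q(x-y)\big(S_{j-1}u(y)-S_{j-1}u(x)\big)\cdot\nabla\Delta_j\theta(y)\dd y,
\]
together with the mean value inequality $|S_{j-1}u(y)-S_{j-1}u(x)|\le|x-y|\,\|\nabla S_{j-1}u\|_{L^\infty}$. Placing $\nabla S_{j-1}u$ in $L^p$ and $\nabla\Delta_j\theta$ in $L^\infty$, Hölder and Young give
\[
\|\Delta_q[\mathcal{R}_\beta,S_{j-1}u\cdot\nabla]\Delta_j\theta\|_{L^p}\lesssim \Big(\int|x|\,|h_q(x)|\dd x\Big)\,\|\nabla S_{j-1}u\|_{L^p}\,\|\nabla\Delta_j\theta\|_{L^\infty},
\]
which by the moment bound and Bernstein is $\lesssim 2^{q(1-\beta)}\|\nabla u\|_{L^p}\|\Delta_q\theta\|_{L^\infty}$; multiplying by $2^{qs}$ and taking $\ell^r_q$ reproduces exactly $\|\nabla u\|_{L^p}\|\theta\|_{B^{s+1-\beta}_{\infty,r}}$.

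The families (II) and (III) do not require cancellation: I would bound $\mathcal{R}_\beta$ crudely by its order, again putting $\nabla u$ in $L^p$ and $\theta$ in $L^\infty$, and track the two geometric sums. The high--low paraproduct forces $\|S_{j-1}\nabla\theta\|_{L^\infty}\lesssim 2^{j(\beta-s)}\|\theta\|_{B^{s+1-\beta}_{\infty,r}}$, a convergent low-frequency sum only when $s<\beta$, whereas the high--high remainder produces $\sum_{j\gtrsim q}2^{-j(s+2-\beta)}$, convergent only when $s>\beta-2$; together these pin down the admissible range $s\in(\beta-2,\beta)$. The residual low-frequency block is handled through $\|\Delta_{-1}f\|_{L^p}\lesssim\|f\|_{L^2}$ for $p\ge2$, completing the first estimate.

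For the endpoint $p=\infty$ the quantity $\|\nabla u\|_{L^\infty}$ is unavailable, so I would instead write $\nabla u = \nabla\nabla^\perp\Lambda^{-2}\omega$ and use that Calder\'on--Zygmund operators are bounded on the (Besov-based, not $L^\infty$) dyadic blocks, giving $\|\Delta_j\nabla u\|_{L^\infty}\lesssim\|\Delta_j\omega\|_{L^\infty}$ and hence $\|S_{j-1}\nabla u\|_{L^\infty}\lesssim j\,\|\omega\|_{L^\infty}+\|u\|_{L^2}$, the $L^2$ term absorbing the low frequencies. This linear-in-$j$ factor is the main obstacle, since inserting it naively into the summation of term (I) diverges. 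I would resolve it by spending only half of the kernel's smoothing, i.e.\ using $2^{-q\beta/2}$ of the available moment to run a geometric estimate that absorbs the factor $j$; this forces $\theta$ to be measured one half-derivative higher, in $B^{s+1-\beta/2}_{\infty,r}$, and leaves the residual $\|u\|_{L^2}\|\theta\|_{L^2}$ from the doubly-low-frequency interaction. The delicate points of the whole argument are thus the sharp kernel moment bounds, the bookkeeping of the two summation thresholds producing $s\in(\beta-2,\beta)$, and---in the $L^\infty$ case---the controlled trade of smoothing against the logarithmic growth of $\|S_{j-1}\nabla u\|_{L^\infty}$.
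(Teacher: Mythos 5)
The paper does not prove this proposition---it is imported verbatim from Wu--Xue \cite{wuxue2012global}---but your argument is the standard one and coincides in all essentials with the technique the paper itself deploys for the closely related Theorem \ref{thm-gen-HKRlem3.3}: Bony decomposition, the first-moment kernel bound of Lemma \ref{lem-hkr3-2} applied to the low--high commutator, crude use of the order of $\mathcal{R}_\beta$ (Proposition \ref{prop-wx-4-1}) on the remaining paraproducts with the two geometric sums pinning down $s\in(\beta-2,\beta)$, and an $L^2$-based Bernstein argument for the singular $\Delta_{-1}$ block. Your treatment of the $p=\infty$ case, trading part of the kernel moment (equivalently, measuring $S_{j-1}u$ in $\dot C^{1-\beta/2}$ rather than Lipschitz) to absorb the logarithmic growth of $\|S_{j-1}\nabla u\|_{L^\infty}$ in terms of $\|\omega\|_{L^\infty}+\|u\|_{L^2}$, is exactly the mechanism that produces the $B^{s+1-\beta/2}_{\infty,r}$ loss in the stated second estimate.
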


 We will similarly generalise Lemma 3.3 (i) of \cite{hmidi2009global} by using Bony's decomposition, as follows.
\begin{thm}\label{thm-gen-HKRlem3.3}
    For any  $s\in ( 0 ,2\alpha)$ and any smooth  $v,\theta$ with $\nabla\cdot v = 0$, 
    \[ \|[\myR,v]\theta\|_{H^s} \lesssim_s \|\nabla v\|_{L^2} \|\theta\|_{B^{s-2\alpha}_{\infty,2}} + \|v\|_{L^2} \|\theta\|_{L^2}.\]
\end{thm}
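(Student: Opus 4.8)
The plan is to run Bony's paraproduct decomposition on the bilinear form and to exploit throughout that $\myR=\Lambda^{1-2\alpha}\mathcal R_1$ is a Fourier multiplier of \emph{negative} order $1-2\alpha$, i.e.\ a smoothing operator of order $2\alpha-1>0$. Writing the full Littlewood--Paley expansion
\[ [\myR,v]\theta=\sum_{j,k}[\myR,\Delta_k v]\Delta_j\theta, \]
I would regroup into three frequency-interaction regimes: $v$ at low frequency ($k\le j-2$), $\theta$ at low frequency ($j\le k-2$), and the diagonal/remainder ($|j-k|\le 1$). Schematically this is $[\myR,v]\theta=[\myR,T_v]\theta+\{\myR T_\theta v-T_{\myR\theta}v\}+\{\myR R(v,\theta)-R(v,\myR\theta)\}$, using that $\myR$ commutes with every $\Delta_q$ and $S_q$. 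The plan is then to treat each piece with tools adapted to its structure, and to let the lowest-frequency contributions be absorbed by the crude factor $\|v\|_{L^2}\|\theta\|_{L^2}$ on the right-hand side.

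The heart of the argument is the low-frequency-$v$ paraproduct $\sum_q[\myR,S_{q-1}v]\Delta_q\theta$. Each summand is frequency-localised in an annulus of size $2^q$, so I may insert a fattened projector and write the block as a kernel integral
\[ [\myR\widetilde\Delta_q,S_{q-1}v]\Delta_q\theta(x)=\int K_q(y)\,\bigl[S_{q-1}v(x-y)-S_{q-1}v(x)\bigr]\,\Delta_q\theta(x-y)\dd y. \]
A first-order Taylor expansion of the smooth coefficient $S_{q-1}v$ produces a factor $\nabla S_{q-1}v$, while the first moment of the rescaled kernel supplies the gain $\int|y|\,|K_q(y)|\dd y\lesssim 2^{-2\alpha q}$, since $K_q$ is the kernel of an order-$(1-2\alpha)$ operator localised at $2^q$. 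Taking $L^2_x$ norms with $\|\nabla S_{q-1}v\|_{L^2}\le\|\nabla v\|_{L^2}$ and $\|\Delta_q\theta\|_{L^\infty}$, the weight $2^{sq}\cdot 2^{-2\alpha q}=2^{(s-2\alpha)q}$ matches the target index exactly, and an $\ell^2_q$ summation of the almost-orthogonal blocks yields precisely $\|\nabla v\|_{L^2}\|\theta\|_{B^{s-2\alpha}_{\infty,2}}$.

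For the remaining two pieces I would not expect a genuine commutator cancellation; instead I would pull the multiplier gain out through the outer localisation. For the remainder, whose $q$-th output block collects frequencies $k\gtrsim q$, writing $\Delta_q\myR$ gives a factor $2^{(1-2\alpha)q}$, and estimating each interaction by $\|\Delta_k v\|_{L^2}\|\widetilde\Delta_k\theta\|_{L^\infty}$ reduces the $H^s$ norm to a discrete convolution of the sequences $2^k\|\Delta_k v\|_{L^2}\in\ell^2$ and $2^{(s-2\alpha)k}\|\Delta_k\theta\|_{L^\infty}\in\ell^2$; a Young/Cauchy--Schwarz summation then closes the estimate, with the genuinely low output frequencies ($q\le 0$) routed instead through the bound $\|v\|_{L^2}\|\theta\|_{L^2}$ so as not to see the singularity of $\myR$ near the origin.

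The main obstacle is the low-frequency-$\theta$, high-frequency-$v$ paraproduct $\myR T_\theta v-T_{\myR\theta}v$. Here the two halves of the commutator are \emph{not} of comparable size — since $\myR$ has negative order it amplifies the low-frequency factor — so no cancellation is available and the term must be controlled by brute force. The plan is to balance the multiplier gain $2^{(1-2\alpha)q}$ against the growth of $\|S_{q-1}\theta\|_{L^\infty}$ coming from the negative smoothness index $s-2\alpha<0$, again reorganising the frequency sum as a convolution of $\ell^2$ sequences controlled by $\|\nabla v\|_{L^2}$ and $\|\theta\|_{B^{s-2\alpha}_{\infty,2}}$ via Young's inequality. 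This is the step where the admissible range of $s$ is really decided and where the precise interplay between the order of $\myR$, the $\dot H^1$-control of $v$, and the negative Besov regularity of $\theta$ must be tracked carefully; I would expect it to be the most delicate part of the proof, with the divergence-free hypothesis and the auxiliary $\|v\|_{L^2}\|\theta\|_{L^2}$ term both entering to tame the borderline frequencies.
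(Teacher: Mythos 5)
Your overall architecture --- Bony decomposition into low-high, high-low and diagonal pieces, a first-moment kernel bound for the low-high paraproduct, and routing the genuinely low output frequencies through $\|v\|_{L^2}\|\theta\|_{L^2}$ --- is exactly the paper's, and your treatment of $\sum_q[\myR,S_{q-1}v]\Delta_q\theta$ (Taylor expansion of $S_{q-1}v$ against the first moment $\|xh_q\|_{L^1}\lesssim 2^{-q}$ of the localised kernel, giving $2^{-2\alpha q}\|\nabla v\|_{L^2}\|\Delta_q\theta\|_{L^\infty}$ and an $\ell^2$ sum) matches the paper's estimation of $\mathrm I$ essentially verbatim. The genuine gap is your decision to abandon the commutator cancellation on the other two pieces. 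For the remainder, bounding a block by $2^{(1-2\alpha)j}\|\Delta_kv\|_{L^2}\|\widetilde\Delta_k\theta\|_{L^\infty}$ places the smoothing at the \emph{output} frequency $2^j$ rather than at the interaction frequency $2^k\gtrsim 2^j$; writing $\|\Delta_kv\|_{L^2}=2^{-k}a_k$ and $\|\widetilde\Delta_k\theta\|_{L^\infty}=2^{(2\alpha-s)k}b_k$, the weighted sum becomes $\sum_{k\ge j-4}2^{(s+1-2\alpha)(j-k)}a_kb_k$, whose kernel is summable only for $s>2\alpha-1$. For the high-low piece, the half $T_{\myR\theta}v$ carries $\|S_{q-1}\myR\theta\|_{L^\infty}\lesssim\sum_{j\le q-2}2^{(1-2\alpha)j}\|\Delta_j\theta\|_{L^\infty}\lesssim 2^{(1-s)q}\|\theta\|_{B^{s-2\alpha}_{\infty,2}}$, and pairing this with $2^{sq}\|\Delta_qv\|_{L^2}$ closes only for $s<1$. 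So the brute-force version proves the estimate only for $s\in(2\alpha-1,1)$, not the claimed $(0,2\alpha)$; in particular it misses the value $s=1-\alpha$ at which the theorem is applied in Proposition~\ref{prop-basic-gamma-est} whenever $\alpha\ge 2/3$.

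The missing idea is that the cancellation $h_q*(fg)-f(h_q*g)$, quantified by Lemma~\ref{lem-hkr3-2}, is available in \emph{every} Bony piece with $f$ taken to be the $v$-factor regardless of whether that factor sits at high or low frequency. Applied to $[\myR,\Delta_qv]S_{q-1}\theta$ and to $[\myR,\Delta_qv]\widetilde\Delta_q\theta$ for $q\ge1$, it converts $\Delta_qv$ into $\nabla\Delta_qv$ and harvests the full gain $2^{(1-2\alpha)q}\cdot 2^{-q}=2^{-2\alpha q}$ at the interaction frequency; the resulting convolution kernels are $2^{-(2\alpha-s)m}\mathbf 1_{m\ge 2}$ for the high-low piece and $2^{sm}\mathbf 1_{m\le 4}$ for the diagonal one, which lie in $\ell^1$ precisely for $s<2\alpha$ and $s>0$ respectively --- this is where the full admissible range is decided. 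Only the finitely many blocks $q\le 0$ of the remainder, where the singularity of $|\xi|^{1-2\alpha}$ at the origin obstructs the kernel representation, are estimated without cancellation, and those are absorbed by $\|v\|_{L^2}\|\theta\|_{L^2}$ exactly as you propose.
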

We prove this result using the following lemma and the equivalent of \HKR{}'s Proposition 3.1 (\WX's Proposition 4.1). This lemma is also quoted in \cite{hmidi2010globalEBS}. We give its short proof here, which   is a variant of Lemma 2.97 of \cite{bahouri2011fourier}.
\begin{lem}[Lemma 3.2 of \cite{hmidi2009global}]\label{lem-hkr3-2}
    Let $p\in[1,\infty]$, and let $f,g$, and $h$ be three functions such that $\nabla f\in L^p$, $g\in L^\infty$, and $xh \in L^1$. Then
    \begin{align}
        \| h * (fg) - f(h*g)\|_{L^p} \le \|xh(x)\|_{L^1_x   } \|\nabla f\|_{L^p} \|g\|_{L^\infty}.
    \end{align}
\end{lem}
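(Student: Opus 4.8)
The plan is to expand both terms into a single integral over the convolution variable and then extract a factor of $\nabla f$ via the fundamental theorem of calculus. Writing out the definitions of the convolutions, the two integrals combine into
\[ \big(h*(fg)\big)(x) - f(x)\,(h*g)(x) = \int h(y)\,\big(f(x-y)-f(x)\big)\,g(x-y)\dd y, \]
so that the whole difference is governed by the increments of $f$, weighted by $h$ and modulated by $g$. The key observation is the elementary identity
\[ f(x-y)-f(x) = -\int_0^1 y\cdot \nabla f(x-ty)\dd t, \]
which holds for smooth $f$ and, since only $\nabla f\in L^p$ appears on the right, extends to the stated hypotheses by a density argument. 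Substituting this in expresses the commutator as a triple integral in $x$, $y$, and $t$.

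First I would take the $L^p_x$ norm of the resulting expression and apply the generalised Minkowski inequality (already recorded in the preliminaries) to move the $L^p_x$ norm inside the integrals over $y$ and over $t\in[0,1]$. This reduces matters to estimating, for each fixed $y$ and $t$, the inner quantity $\big\| g(x-y)\,y\cdot\nabla f(x-ty)\big\|_{L^p_x}$.

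Then I would estimate this inner norm by pulling out $\|g\|_{L^\infty}$ together with the scalar factor $|y|$, and invoking the translation invariance of the $L^p_x$ norm to replace $\|\nabla f(x-ty)\|_{L^p_x}$ by $\|\nabla f\|_{L^p}$. What remains is the product $\|g\|_{L^\infty}\|\nabla f\|_{L^p}$ times the integral $\int |y|\,|h(y)|\dd y \int_0^1\dd t = \|x\,h(x)\|_{L^1_x}$, which is precisely the claimed bound.

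The only genuine subtlety is the regularity needed to justify the fundamental theorem of calculus step; because the lemma is applied solely to smooth functions in the sequel (for instance in Theorem \ref{thm-gen-HKRlem3.3}), this raises no difficulty, and the combination of Minkowski's inequality with translation invariance turns the rest into a direct computation with no loss in the constant.
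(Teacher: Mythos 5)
Your proposal is correct and follows essentially the same route as the paper's proof: expand the commutator as an integral against the increments $f(x-y)-f(x)$, apply the fundamental theorem of calculus to produce $\nabla f$, and conclude with Minkowski/H\"older and translation invariance (the paper phrases the last step as H\"older with general exponents $\tfrac1q+\tfrac1r=\tfrac1p$ before specialising to $q=p$, $r=\infty$, but this is the same estimate). No gaps.
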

\begin{proof}
    By a direct computation and the fundamental theorem of calculus, setting $F:= h * (fg) - f(h*g)$,
    \begin{align}
        F(x)&= \int_{\mathbb R^n} h(x-y) \big(f(y) - f(x)\big) g(y) \dd y 
        \\ &= \int_0^1\int_{\mathbb R^n} h(x-y) \nabla f((1-\tau)x + \tau y) \cdot (y-x) g(y) \dd y\dd \tau
        \\ &= -\int_0^1 \int_{\mathbb R^n} h(z) \nabla f(x - \tau z) \cdot z g(x-z)  \dd z \dd \tau.
    \end{align}
Therefore, \Holder's inequality and translation invariance of the Lebesgue measure gives
\begin{align}
    \|F\|_{L^p} &\le \int_0^1 \int_{\mathbb R^n} |zh(z)| \| \nabla f(x - \tau z) g(x - z)\|_{L^p_x} \dd z\dd \tau
    \\ &\le \int_{\mathbb R^n} |zh(z)|  \int_0^1 \|\nabla f(x-\tau z)\|_{L^q_x} \dd\tau  \|g(x-z)\|_{L^r_x} \dd z
    \\ &\le \|zh(z)\|_{L^1_z} \|\nabla f\|_{L^q} \|g\|_{L^r},
\end{align} for any $\frac1q+\frac1r = \frac1p$. In particular, setting $q=p,\ r=\infty$, we obtain the claimed result. \end{proof}

\begin{prop}[Proposition 4.1 of \cite{wuxue2012global}]\label{prop-wx-4-1}
    Let $j\in\mathbb N$, $\alpha \in (\frac12,1)$, and $\myR = \Lambda^{1-2\alpha}\mathcal R$. Then:
\begin{enumerate}
    \item For every $p\in(1,\infty)$ and  $q>p$ defined by $\frac1q = \frac1p - \frac{2\alpha-1}n $, $\myR$ is a bounded map $L^p\to L^q$.
    \item Let $\chi\in\mathcal D(\mathbb R^n)$. Then for each $(p,s)\in [1,\infty]\times (2\alpha - 1,\infty)$ and $f\in L^p(\mathbb R^n)$, \begin{align}
        \| |\nabla |^s \chi(2^{-j} |\nabla | ) \myR f\|_{L^p} \le 2^{j(s+1-2\alpha)} \|f\|_{L^p} .
    \end{align}
    Moreover, $|\nabla |^s \chi(2^{-j} |\nabla | )$ is a convolution operator with kernel $K$ satisfying 
    \[ |K(x)| \lesssim \frac1{(1+|x|)^{n+s+1-\beta}}.\]
    \item Let $\mathcal O$ be an annulus centered at the origin. Then there exists $\phi\in\mathcal S(\mathbb R^n)$ with spectrum supported away from $0$ such that for every $f$ with spectrum in $2^j \mathcal O$,
    \begin{align}
        \myR f = 2^{j(n+1-2\alpha)} \phi(2^j \functioninput)* f.
    \end{align}
\end{enumerate}
\end{prop}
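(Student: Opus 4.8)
This collects three properties of the operator $\myR = \Lambda^{1-2\alpha}\mathcal R$. Let me plan a proof of each.

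The plan is to handle the three parts separately, since each rests on a distinct classical principle: Hardy--Littlewood--Sobolev for (1), a scaling reduction plus a kernel decay estimate for (2), and a pure frequency-localisation rescaling for (3). Throughout I would write $r(\xi)$ for the Fourier symbol of $\mathcal R$, homogeneous of degree zero and smooth off the origin, so that $\myR$ has symbol $|\xi|^{1-2\alpha}r(\xi)$; note that since $\alpha\in(\tfrac12,1)$ the order $2\alpha-1\in(0,1)$ is positive, i.e. $\myR$ is a smoothing operator, and that in this specialised statement $\beta = 2\alpha$.

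For (1), I would factor $\myR = \mathcal R\,\Lambda^{1-2\alpha}$, the two commuting as Fourier multipliers. The operator $\Lambda^{1-2\alpha}=\Lambda^{-(2\alpha-1)}$ is the Riesz potential of order $2\alpha-1\in(0,n)$, so the Hardy--Littlewood--Sobolev inequality maps $L^p\to L^q$ exactly when $\tfrac1q=\tfrac1p-\tfrac{2\alpha-1}{n}$ with $1<p<q<\infty$ (the constraint $q<\infty$ being precisely $p<\tfrac{n}{2\alpha-1}$). Composing with the Calderón--Zygmund boundedness of $\mathcal R$ on $L^q$ for $1<q<\infty$ then yields the claim.

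For (2) I would first reduce to the scale $j=0$ by homogeneity. The multiplier of $|\nabla|^{s}\chi(2^{-j}|\nabla|)\myR$ is $m_j(\xi)=|\xi|^{s+1-2\alpha}\chi(2^{-j}|\xi|)r(\xi)$, and using that $r$ is homogeneous of degree zero one checks
\[ m_j(\xi)=2^{j(s+1-2\alpha)}\,\psi(2^{-j}\xi),\qquad \psi(\eta):=|\eta|^{s+1-2\alpha}\chi(|\eta|)r(\eta). \]
Writing $K_0:=\check\psi$ for the inverse Fourier transform, the kernel at scale $j$ is $K_j(x)=2^{j(s+1-2\alpha)}2^{jn}K_0(2^jx)$, so $\|K_j\|_{L^1}=2^{j(s+1-2\alpha)}\|K_0\|_{L^1}$ and Young's inequality (valid on the whole range $p\in[1,\infty]$, which is why one argues by kernels rather than by interpolation) reduces the stated bound to $K_0\in L^1$. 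This is in turn guaranteed by the pointwise decay $|K_0(x)|\lesssim (1+|x|)^{-(n+s+1-2\alpha)}$, whose exponent exceeds $n$ precisely because $s+1-2\alpha>0$. Establishing this decay is the main obstacle and the heart of the proposition. The clean route is to observe that $\psi$ is a smooth, compactly supported modification of the homogeneous symbol $|\eta|^{s+1-2\alpha}r(\eta)$ of degree $a:=s+1-2\alpha>0$, whose inverse Fourier transform is homogeneous of degree $-(n+a)$ and smooth away from the origin; cutting off with $\chi$ produces a kernel matching this profile for large $|x|$, bounded near $x=0$, up to a Schwartz remainder. Concretely I would split $\int e^{2\pi i x\cdot\eta}\psi(\eta)\,\dd\eta$ at $|\eta|\sim|x|^{-1}$, bounding the region $|\eta|<|x|^{-1}$ directly by $\int_{|\eta|<|x|^{-1}}|\eta|^{a}\,\dd\eta\sim|x|^{-(n+a)}$, and integrating by parts $N$ times on $|\eta|>|x|^{-1}$, where each $\eta$-derivative of $|\eta|^{a}r(\eta)$ costs a factor $|\eta|^{-1}$ and the resulting integrals again sum to order $|x|^{-(n+a)}$.

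For (3) I would argue by rescaling alone. Choose $\tilde\varphi\in\mathcal D(\mathbb R^n\setminus\{0\})$ with $\tilde\varphi\equiv1$ on the annulus $\mathcal O$; then any $f$ with spectrum in $2^j\mathcal O$ satisfies $\hat f=\tilde\varphi(2^{-j}\functioninput)\hat f$, so $\myR f=\myR\,\tilde\varphi(2^{-j}\mathrm D)f$. The composed symbol is $|\xi|^{1-2\alpha}r(\xi)\tilde\varphi(2^{-j}\xi)$; setting $\hat\phi(\eta):=|\eta|^{1-2\alpha}r(\eta)\tilde\varphi(\eta)$, which lies in $\mathcal D(\mathbb R^n\setminus\{0\})$ since $|\eta|^{1-2\alpha}$ is smooth on the support of $\tilde\varphi$, the degree-zero homogeneity of $r$ gives the symbol as $2^{j(1-2\alpha)}\hat\phi(2^{-j}\xi)$. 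Its convolution kernel is $2^{j(1-2\alpha)}2^{jn}\phi(2^j\functioninput)=2^{j(n+1-2\alpha)}\phi(2^j\functioninput)$, and $\phi\in\mathcal S$ has spectrum supported away from the origin by construction, which is exactly the assertion.
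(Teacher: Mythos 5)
Your proof is correct. Note, however, that the paper does not prove this proposition at all: it is imported verbatim as Proposition~4.1 of Wu--Xue \cite{wuxue2012global}, so there is no in-paper argument to compare against. What you supply is the standard self-contained derivation, and it is almost certainly the same route as the cited source: Hardy--Littlewood--Sobolev for the Riesz potential $\Lambda^{-(2\alpha-1)}$ composed with Calder\'on--Zygmund boundedness of $\mathcal R$ for part (1); reduction to scale $j=0$ by the degree-zero homogeneity of the Riesz symbol, Young's inequality, and the pointwise decay $|K_0(x)|\lesssim (1+|x|)^{-(n+s+1-2\alpha)}$ obtained by splitting at $|\eta|\sim|x|^{-1}$ for part (2); and pure rescaling with a fattened annular cutoff for part (3). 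You also correctly identify the two spots where the quoted statement is slightly garbled (the absolute constant suppressed in the part (2) inequality, and the stray $\beta=2\alpha$ in the kernel bound) and repair them. Two small polish points: in the integration-by-parts step on $|\eta|>|x|^{-1}$ you should either use a smooth dyadic partition or track the boundary terms at $|\eta|=|x|^{-1}$ (they are harmless, each contributing $O(|x|^{-(n+a)})$, but they do appear); and for $|x|\le 1$ the bound $|K_0(x)|\lesssim 1$ should be stated explicitly, which follows since $\psi$ is compactly supported with the locally integrable singularity $|\eta|^{a}$, $a>0$.
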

\begin{proof}[Proof of Theorem \ref{thm-gen-HKRlem3.3}]
We use    Bony's decomposition,
\begin{align}
    [\myR,v]\theta &= \mathrm I + \mathrm{II } + \mathrm{III} , \\
    \mathrm{I} &= \sum_{q\in \mathbb N}[\myR, S_{q-1} v]\Delta_q \theta  =\sum_{q\in \mathbb N}  \mathrm I_q ,\\
    \mathrm{II} &= \sum_{q\in \mathbb N}[\myR, \Delta_{q-1} v]S_{q-1} \theta= \sum_{q\in \mathbb N} \mathrm{II }_q , \\
    \mathrm{III} &= \sum_{q\ge -1} [\myR, \Delta_q v]\widetilde \Delta_q \theta=  \sum_{q\ge -1} \mathrm{III}_q,
\end{align}
where $\widetilde\Delta_q := \Delta_{q-1} + \Delta_q + \Delta_{q+1}$ (and $\Delta_{-2} := 0$). 
The terms $\mathrm{I}$ and $\mathrm{II}$ are low-high and high-low interactions; the term $\mathrm{III}$ are the interactions at similar frequencies (low-low and high-high). 
\subsubsection*{Estimation of $\mathrm I$}
We have
\begin{align}
   I_q =  2^{(1-2\alpha)q} \big(h_q * (S_{q-1} v \Delta_q \theta) - (S_{q-1} v) ( h_q * \Delta_q \theta)\big),
\end{align}
for some $h_q = 2^{dq} h(2^q x)$ coming from the third part of Proposition \ref{prop-wx-4-1}%
.
By Lemma \ref{lem-hkr3-2},
\begin{align}
\|\mathrm I_q\|_{L^2} 
&\lesssim     2^{(1-2\alpha)q} \|xh_q\|_{L^1} \|\nabla S_{q-1} u\|_{L^2} \|\Delta_q \theta\|_{L^\infty}  \\
&\lesssim 2^{-(2\alpha-1)q} 2^{-q} \|\nabla u\|_{L^2} \|\Delta_q \theta\|_{L^\infty}  \\
&= 2^{-2\alpha q} \|\nabla u\|_{L^2} \|\Delta_q \theta\|_{L^\infty}. 
\end{align}
Therefore, we have (there are no low frequency terms in $\mathrm I$)
\begin{align}
    \|{\mathrm I}\|_{H^s}^2 &\sim \sum_{q\ge 0} 2^{2q s}   \|\mathrm I_q\|_{L^2}^2\\ 
    &\lesssim \|\nabla u\|_{L^2}^2 \sum_{q\ge 0} 2^{2q(s-2\alpha)} \|\Delta_q \theta \|_{L^\infty}^2 \\
    &\le \|\nabla u\|^2_{L^2} \|\theta\|^2_{B^{s-2\alpha}_{\infty,2}}.
\end{align}
\subsubsection*{Estimation of $\mathrm{II}$}
Similarly to $\mathrm{I}$, we write
\begin{align}
    \mathrm{II}_q = 2^{(1-2\alpha)q}\big( h_q * (\Delta _{q} u S_{q-1} \theta) - (\Delta_q v) (h_q * S_{q-1} \theta)\big)
\end{align}
Lemma \ref{lem-hkr3-2} this time gives
\begin{align}
\|\mathrm{II}_q\|_{L^2} 
&\lesssim     2^{(1-2\alpha)q} \|xh_q\|_{L^1} \|\nabla \Delta_q u\|_{L^2} \|S_{q-1} \theta\|_{L^\infty}  \\
&\lesssim  2^{-2\alpha q}  \|\nabla u\|_{L^2} \sum_{j\le q-2}\|\Delta_j \theta\|_{L^\infty}.  
\end{align}
We want to use $ \|\mathrm {II}\|_{H^s}^2 \sim \sum_{q\ge 0} 2^{2q s} \|\mathrm{II}_q \|_{L^2}^2$ again; this time we will use the discrete Young's inequality. 
Multiplying by $2^{sq}$, we note that
\begin{align}
    2^{sq}\|\mathrm{II}_q\|_{L^2} 
    &= 2^{-(2\alpha - s)q} \|\nabla u\|_{L^2} \sum_{j\le q-2}\|\Delta_j \theta\|_{L^\infty}\\
    &\lesssim \|\nabla u\|_{L^2}   \left( 2^{-(2\alpha - s)\functioninput} \star  (2^{-(2\alpha - s)\functioninput}  \|\Delta_\functioninput \theta\|_{L^\infty}) \right) (q),
\end{align}
where $\star$ denotes the discrete convolution on $\mathbb Z_{\ge -1}$,
\[ a \star b (q) = \sum_{q_1+q_2 = q} a_{q_1} b_{q_2}.\]
Applying the discrete Young's inequality (note that $2^{-(2\alpha - s)\functioninput}\in \ell^1  \iff s < 2\alpha$) with parameters $1+\frac12 = \frac11 + \frac12$,
\begin{align}
    \|\mathrm{II}\|_{H^s} 
    &\lesssim \|\nabla u\|_{L^2} \Big\|  2^{-(2\alpha - s)\functioninput} \star  2^{-(2\alpha - s)\functioninput}  \|\Delta_\functioninput \theta\|_{L^\infty} (q)\Big\|_{\ell^2(dq)}
    \\
    &\lesssim_{2\alpha-s} \|\nabla u\|_{L^2} \|\theta\|_{B^{-(2\alpha - s)}_{\infty,2}}. 
\end{align}
\subsubsection*{Estimation of $\mathrm{III}$}
We further split $\mathrm{III}$ into high-high and low-low interactions,
\begin{align}
    \mathrm{III}&=J_1+J_2,\\
    J_1 &= \sum_{q\ge 1} [\myR,\Delta_q v]\widetilde \Delta_q \theta ,\\
    J_2 &= \sum_{q\le 0} [\myR,\Delta_q v]\widetilde \Delta_q \theta .
\end{align}
$J_1$ with no low frequency terms is dealt with as before, giving
\begin{align}
   q\ge 1 \implies  \| [\myR,\Delta_q v]\widetilde \Delta_q \theta \|_{L^2} \lesssim 2^{-2\alpha q}\|\nabla v\|_{L^2} \| \widetilde \Delta_q \theta \|_{L^\infty },
\end{align}
so that 
\begin{align}
    \|\Delta_j J_1\|_{L^2} &\lesssim \|\nabla v \|_{L^2} \sum_{q:q\ge j-4} 2^{-2\alpha q}  \|\widetilde\Delta_q \theta\|_{L^\infty},\\
    2^{js} \|\Delta_j J_1\|_{L^2} &\lesssim \|\nabla v \|_{L^2} \sum_{q\ge j-4} 2^{js} 2^{-2\alpha q}  \|\widetilde\Delta_q \theta\|_{L^\infty} \\
    &= \|\nabla v \|_{L^2} \sum_{q\ge j-4} 2^{(j-q)s} 2^{-(2\alpha-s) q}  \|\widetilde\Delta_q \theta\|_{L^\infty}.
\end{align}
Under the assumption that $s>0$, we have by the discrete Young's inequality again,
\begin{align}
    \|J_1\|_{H^s} \lesssim \|\nabla v\|_{L^2} \|\theta\|_{B^{-(2\alpha - s)}_{\infty,2}}.
\end{align}
For $J_2$, the lowest frequency terms don't have a corresponding annulus for us to apply Lemma \ref{lem-hkr3-2}, so the Riesz transform has to be dealt with in a different way. We proceed without using the commutator structure, instead relying on Bernstein inequalities and $L^2\to L^p$ boundedness of $\myR$ for  $p\in(2,\infty)$ defined by $\frac1p = \frac12 - \frac{2\alpha - 1}d$ (as in Proposition \ref{prop-wx-4-1}): 
\begin{align}
 \| [\myR,\Delta_q v]\widetilde \Delta_q \theta \|_{L^2}
 &\lesssim \|\Delta_q v\|_{L^2} \big(\|\Delta_q \theta\|_{L^\infty} + \| \myR \Delta_q \theta\|_{L^\infty}  \big)
 \\
 &= \|\Delta_q v\|_{L^2} \big(\|\Delta_q \theta\|_{L^\infty} + \| \Delta_q \myR \theta\|_{L^\infty}  \big)
\\
&\lesssim  \|v\|_{L^2} (\|\theta\|_{L^2} + \|\myR\theta\|_{L^p})
\\
&\lesssim \|v\|_{L^2} \|\theta\|_{L^2}.
\end{align}
(Note that all constants from Bernstein inequalities are left implicit because $J_2$ is a finite sum.) This completes the estimation of $\mathrm{III}$ and the Theorem is proved.
\end{proof}

\section{Basic A Priori Estimates}
\label{sectionBasic}
Here we collect some estimates for smooth solutions of \eqref{the-eqn}. Since $\theta$ solves a transport equation with no diffusion, and by testing the $u$ equation with itself, we obtain
\begin{align}
    \|\theta(t)\|_{L^p} &\leq \|\theta_0\|_{L^p}, \qquad p\in[1,\infty],\\
    \frac12\dv{}{t}\|u(t)\|_{L^2}^2 + \|\Lambda^\alpha u(t) \|_{L^2}^2 &\le \|\theta_0\|_{L^2} \|u\|_{L^2}.
\end{align}
These imply
\begin{align}
     \dv{}{t}\|u(t)\|_{L^2} &\le \|\theta_0\|_{L^2},\\
     \frac12\dv{}{t}\|u(t)\|_{L^2}^2 + \|\Lambda^\alpha u(t) \|_{L^2}^2 &\le \|\theta_0\|_{L^2} (\|u_0\|_{L^2}+ \|\theta_0\|_{L^2} t),
\end{align}
and therefore:
\begin{prop}\label{prop-basic-estimates}
Suppose $(u,\theta)$ are smooth solutions of \eqref{the-eqn} with initial data $u_0\in L^2$     and $\theta_0\in L^2\cap L^p$. Then
\begin{align}
    \|\theta(t)\|_{L^p} &\le  \|\theta_0\|_{L^p}, \\
    \|u(t)\|_{L^2}^2 
    +\int_0^t \|\Lambda^\alpha u(s)\|_{L^2}^2 \dd s &\lesssim_{u_0,\theta_0} 1+t^2. 
\end{align}
\end{prop}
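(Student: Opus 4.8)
The plan is to establish the two displayed inequalities by the standard energy method already sketched in the preamble, splitting the work into the transport bound for $\theta$, the energy identity for $u$, and a double integration in time.

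First I would prove the $L^p$ bound on $\theta$. Since $\theta$ solves the transport equation $\theta_t + u\cdot\nabla\theta=0$ with $\nabla\cdot u=0$, for a smooth solution and $p\in[1,\infty)$ I compute $\dv{}{t}\|\theta(t)\|_{L^p}^p = -\int p|\theta|^{p-2}\theta\,(u\cdot\nabla\theta)\dd x = -\int u\cdot\nabla(|\theta|^p)\dd x = \int (\nabla\cdot u)\,|\theta|^p\dd x = 0$ after an integration by parts, so $\|\theta(t)\|_{L^p}=\|\theta_0\|_{L^p}$. The endpoint $p=\infty$ follows by sending $p\to\infty$, or directly from the fact that $\theta$ is constant along the volume-preserving flow generated by $u$. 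In particular $\|\theta(t)\|_{L^2}\le\|\theta_0\|_{L^2}$ for all $t$, which is what the velocity estimate will need.

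Next I would derive the energy identity for $u$ by testing the momentum equation against $u$. The transport term vanishes, $\int u\cdot(u\cdot\nabla u)\dd x = \tfrac12\int u\cdot\nabla|u|^2\dd x = -\tfrac12\int(\nabla\cdot u)|u|^2\dd x = 0$, and the pressure term vanishes likewise, $\int u\cdot\nabla p\dd x = -\int(\nabla\cdot u)\,p\dd x = 0$; both use only $\nabla\cdot u=0$. By Plancherel, $\int u\cdot\Lambda^{2\alpha}u\dd x=\|\Lambda^\alpha u\|_{L^2}^2$. The buoyancy term is controlled by Cauchy--Schwarz together with the conservation of $\|\theta\|_{L^2}$ from the previous step, $\int \theta\,(u\cdot e_2)\dd x\le\|u\|_{L^2}\|\theta\|_{L^2}\le\|\theta_0\|_{L^2}\|u\|_{L^2}$, which produces exactly the inequality $\tfrac12\dv{}{t}\|u\|_{L^2}^2+\|\Lambda^\alpha u\|_{L^2}^2\le\|\theta_0\|_{L^2}\|u\|_{L^2}$.

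Finally I would integrate twice. Discarding the nonnegative dissipation and dividing through by $\|u\|_{L^2}$ gives $\dv{}{t}\|u(t)\|_{L^2}\le\|\theta_0\|_{L^2}$, hence $\|u(t)\|_{L^2}\le\|u_0\|_{L^2}+\|\theta_0\|_{L^2}\,t$ upon integrating. Substituting this linear bound back into the energy inequality and integrating over $[0,t]$ yields $\|u(t)\|_{L^2}^2+2\int_0^t\|\Lambda^\alpha u(s)\|_{L^2}^2\dd s\le(\|u_0\|_{L^2}+\|\theta_0\|_{L^2}\,t)^2$, which is of the claimed form $\lesssim_{u_0,\theta_0}1+t^2$. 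There is no real obstacle here: the only point needing a word of care is the division by $\|u\|_{L^2}$, which may vanish, and this is handled in the usual way by working with $(\|u\|_{L^2}^2+\epsilon)^{1/2}$ and letting $\epsilon\downarrow 0$. The single genuinely structural feature, which I would emphasise, is that the buoyancy forcing is only dominated by the conserved but nonzero $L^2$ mass of $\theta$, so the energy is allowed to grow in time rather than stay uniformly bounded — this is precisely why the bound is $1+t^2$ and not a constant.
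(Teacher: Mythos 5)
Your proof is correct and follows essentially the same route as the paper: conservation of $\|\theta\|_{L^p}$ by the transport structure, the $L^2$ energy identity for $u$ with the buoyancy term bounded via Cauchy--Schwarz and $\|\theta(t)\|_{L^2}\le\|\theta_0\|_{L^2}$, then two integrations in time to get the linear-in-$t$ bound on $\|u\|_{L^2}$ and the $1+t^2$ bound on the energy. The extra care you take with the division by $\|u\|_{L^2}$ is a reasonable refinement the paper leaves implicit.
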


\begin{prop}\label{prop-basic-gamma-est}
Suppose $(u,\theta)$ are smooth solutions of \eqref{the-eqn} with initial data $u_0 \in H^1$ and $\theta \in L^2\cap L^\infty$. Then for $\omega = \nabla^\perp \cdot  u$,
\begin{align}
    \|\omega(t)\|_{L^2}^2 + \int_0^t \|\omega(s) - \myR\theta(s)\|_{\dot H^{\alpha}}^2 \dd s \le \Phi_1(t).
\end{align}    
\end{prop}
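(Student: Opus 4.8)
The plan is to avoid estimating $\omega$ directly — the source term $\partial_1\theta$ in the $\omega$-equation costs a full derivative that the $\dot H^\alpha$-dissipation cannot pay for — and instead run the $L^2$ energy estimate on the auxiliary quantity $\Gamma = \omega - \myR\theta$, whose evolution $(\partial_t + u\cdot\nabla + \Lambda^{2\alpha})\Gamma = [\myR,u\cdot\nabla]\theta$ was derived in Section~\ref{sectionMyR}. Pairing this equation with $\Gamma$ in $L^2$, the transport term vanishes because $\nabla\cdot u = 0$, and the dissipation produces exactly the quantity appearing in the statement:
\[ \frac12\frac{d}{dt}\|\Gamma\|_{L^2}^2 + \|\Gamma\|_{\dot H^\alpha}^2 = \int_{\mathbb R^2}[\myR,u\cdot\nabla]\theta\;\Gamma\dd x, \]
since $\|\Gamma\|_{\dot H^\alpha} = \|\Lambda^\alpha\Gamma\|_{L^2} = \|\omega-\myR\theta\|_{\dot H^\alpha}$. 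The remaining work is to bound the right-hand side and to translate an estimate on $\Gamma$ into one on $\omega$.

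For the forcing term I would estimate the commutator in $L^2$ and pair by Cauchy--Schwarz, $\int[\myR,u\cdot\nabla]\theta\,\Gamma\dd x \le \|[\myR,u\cdot\nabla]\theta\|_{L^2}\|\Gamma\|_{L^2}$. The commutator is controlled by Proposition~\ref{wx-prop42} with $\beta = 2\alpha\in(1,2)$, $p=r=2$, and the choice $s=0$; crucially $0\in(2\alpha-2,2\alpha)$ for \emph{every} $\alpha\in(\tfrac12,1)$, which is why this $L^2$ pairing, rather than a homogeneous dual-norm absorption, covers the whole subcritical range. This gives $\|[\myR,u\cdot\nabla]\theta\|_{L^2}\lesssim\|\nabla u\|_{L^2}\big(\|\theta\|_{B^{1-2\alpha}_{\infty,2}}+\|\theta\|_{L^2}\big)$. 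Since $1-2\alpha<0$, the embeddings $L^\infty\hookrightarrow B^0_{\infty,\infty}\hookrightarrow B^{1-2\alpha}_{\infty,2}$ bound the Besov norm by $\|\theta\|_{L^\infty}$; together with the identity $\|\nabla u\|_{L^2}=\|\omega\|_{L^2}$ and the conservation $\|\theta(t)\|_{L^p}\le\|\theta_0\|_{L^p}$ from Proposition~\ref{prop-basic-estimates}, this reduces the forcing to $\|[\myR,u\cdot\nabla]\theta\|_{L^2}\lesssim_{\theta_0}\|\omega\|_{L^2}$.

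To close the loop I would write $\omega = \Gamma + \myR\theta$, so that the right-hand side is $\lesssim_{\theta_0}(\|\Gamma\|_{L^2}+\|\myR\theta\|_{L^2})\|\Gamma\|_{L^2}$; keeping the dissipation term positive on the left and applying Young's inequality yields the differential inequality $\frac{d}{dt}\|\Gamma\|_{L^2}^2\le C\big(\|\Gamma\|_{L^2}^2 + \|\myR\theta\|_{L^2}^2\big)$. With $\|\myR\theta(t)\|_{L^2}$ bounded uniformly in time (see below), Gr\"onwall's lemma delivers the single-exponential bound $\|\Gamma(t)\|_{L^2}^2\le\Phi_1(t)$; integrating the energy identity back over $[0,t]$ then controls $\int_0^t\|\Gamma\|_{\dot H^\alpha}^2\dd s$ by the same $\Phi_1$ (a polynomial-in-$t$ prefactor being absorbed into the exponential), and finally $\|\omega(t)\|_{L^2}^2\le 2\|\Gamma(t)\|_{L^2}^2 + 2\|\myR\theta(t)\|_{L^2}^2$ gives the stated conclusion.

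The one delicate point — and the main obstacle — is the control of $\|\myR\theta\|_{L^2}$, which enters both the initial datum $\Gamma_0 = \omega_0-\myR\theta_0$ and the Gr\"onwall inhomogeneity. Because $\myR = \Lambda^{1-2\alpha}\mathcal R_1$ carries the negative-order factor $\Lambda^{1-2\alpha}$, it \emph{amplifies} low frequencies, so $\theta\in L^2$ is not by itself enough and one genuinely needs integrability below $L^2$. The clean route is Proposition~\ref{prop-wx-4-1}(1), which shows $\myR$ maps $L^{1/\alpha}\to L^2$ (indeed $\frac1{1/\alpha}-\frac{2\alpha-1}{2}=\frac12$), whence $\|\myR\theta(t)\|_{L^2}\lesssim\|\theta(t)\|_{L^{1/\alpha}}\le\|\theta_0\|_{L^{1/\alpha}}$ is time-uniform. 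As $1/\alpha\in(1,2)$, this is precisely where the low-integrability of the data (available through $\theta_0\in L^1\cap L^\infty$ in the setting of Theorem~\ref{main}, by interpolation) is used; no high-frequency regularity of $\theta$ is required.
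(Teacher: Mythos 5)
Your proof is correct, and its skeleton --- the $L^2$ energy estimate on $\Gamma=\omega-\myR\theta$, Gr\"onwall, and the final triangle inequality $\|\omega\|_{L^2}\le\|\Gamma\|_{L^2}+\|\myR\theta\|_{L^2}$ --- coincides with the paper's. The one genuine divergence is the treatment of the forcing term: the paper writes $[\myR,u\cdot\nabla]\theta=\nabla\cdot([\myR,u]\theta)$, integrates by parts, and pairs $\|[\myR,u]\theta\|_{\dot H^{1-\alpha}}$ against $\|\Gamma\|_{\dot H^{\alpha}}$ (absorbing the latter into the dissipation by Young), which is exactly what its own Theorem~\ref{thm-gen-HKRlem3.3} with $s=1-\alpha\in(0,2\alpha)$ was built for; you instead keep the commutator in $L^2$, invoke the quoted Proposition~\ref{wx-prop42} with $(p,r,s)=(2,2,0)$ and $\beta=2\alpha$, and pair by Cauchy--Schwarz against $\|\Gamma\|_{L^2}$. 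Both closings work because $1-2\alpha<0$ makes $L^\infty\hookrightarrow B^{1-2\alpha}_{\infty,2}$ (resp.\ $B^{1-3\alpha}_{\infty,2}$) available; your route has the virtue of not needing Theorem~\ref{thm-gen-HKRlem3.3} at all for this proposition, at the cost of not using the dissipation to digest the forcing (immaterial here, since either way the output is single-exponential). You are also more careful than the paper on a point it passes over silently: the bound $\|\myR\theta\|_{L^2}\lesssim\|\theta\|_{L^{1/\alpha}}$ from Proposition~\ref{prop-wx-4-1}(1), needed both for $\Gamma_0\in L^2$ and in the Gr\"onwall inhomogeneity, genuinely requires integrability of $\theta_0$ below $L^2$ (available from $\theta_0\in L^1\cap L^\infty$ as in Theorem~\ref{main}, but not from the $L^2\cap L^\infty$ hypothesis as literally stated in the proposition); flagging this is a worthwhile observation rather than a defect of your argument.
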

\begin{proof}
As mentioned, we set $\Gamma = \omega - \myR\theta$. It solves the equation
\begin{align}
        (\partial_t + u\cdot\nabla + \Lambda^{2\alpha} )\Gamma = [\myR,u\cdot\nabla ]\theta   
\end{align}

Taking the $L^2$ inner product with $\Gamma$ and using that $\nabla\cdot u=0$ in the form of the identity $[\myR, u \cdot \nabla] \theta=\nabla\cdot ([\myR, u] \theta)$,
\begin{align}
    \frac12 \frac{\dd}{\dd t} \|\Gamma\|_{L^2}^2 + \|\Gamma \|_{\dot H^{\alpha}}^2 
    &= \int_{\mathbb R^2} \nabla\cdot ([\myR, v] \theta) \Gamma
    \\ & \le \| [\myR, v]\theta \|_{\dot H^{1-\alpha}} \| \Gamma\|_{\dot H^{\alpha}}.
\end{align}
By Theorem \ref{thm-gen-HKRlem3.3}, Proposition \ref{prop-basic-estimates}, and Proposition \ref{prop-wx-4-1},
\begin{align}
        \|[\myR,u]\theta\|_{\dot H^{1-\alpha}} 
        &\lesssim \|\nabla u\|_{L^2} \|\theta\|_{B^{1-3\alpha}_{\infty,2}} + \|u\|_{L^2} \|\theta\|_{L^2} 
        \\
        &\lesssim \|\omega\|_{L^2}\|\theta\|_{L^{\infty}} + \|u\|_{L^2} \|\theta\|_{L^2}
        \\
        &\lesssim \|\omega\|_{L^2} + 1 +t
        \\
        &\lesssim \|\Gamma\|_{L^2} + \|\myR \theta\|_{L^2} + 1 + t
        \\
        &\lesssim \|\Gamma\|_{L^2} + 1 + t.
\end{align}
Therefore, 
\begin{align}
    \frac12 \frac{\dd}{\dd t} \|\Gamma\|_{L^2}^2 + \|\Gamma \|_{\dot H^{\alpha}}^2 = \int_{\mathbb R^2} \nabla\cdot ([\myR, v] \theta) \Gamma \lesssim (\|\Gamma\|_{L^2} + 1 + t)\| \Gamma\|_{\dot H^{\alpha}}.
\end{align}
Young's inequality for products gives
\begin{align}
\frac{\dd}{\dd t} \|\Gamma(t)\|_{L^2}^2 + \|\Gamma \|_{\dot H^{\alpha}}^2 \lesssim \|\Gamma\|_{L^2}^2 + 1 + t^2.
\end{align}
An application of Grownwall's inequality finishes the proof.
\end{proof}

\section{A Priori Estimates for the Vorticity}  
\label{sectionAPrioriVort}
\begin{prop}\label{thm-omega-lp-estimate}
Let $(u,\theta)$ be a smooth solution of \eqref{the-eqn}, and let $u_0 \in H^1\cap \dot W^{1,p}$, with $p\in(2,\infty)$,  $\nabla\cdot u_0=0$, and $\theta_0 \in L^2\cap L^\infty$. Then for $\omega = \nabla^\perp \cdot  u$,
\begin{align}
    \|\omega(t)\|_{L^p} \le \Phi_1(t).
\end{align}    
\end{prop}
\begin{proof}
    We will again use the equation for $\Gamma$,
    \[ \partial_t \Gamma + u\cdot \nabla \Gamma + \Lambda^{2\alpha}\Gamma = [\myR, u\cdot \nabla ]\theta \]
    From the $L^p$ estimates in Corollary 3.6 of Cordoba--Cordoba (they prove the estimate for the homogeneous equation $\Gamma_t + u\cdot \nabla \Gamma + \Lambda^{2\alpha} \Gamma = 0$; what we need follows by Duhamel's principle),
    \[ \|\Gamma(t)\|_{L^p} \le \|\Gamma_0\|_{L^p} + \int_0^t \|[\myR, u\cdot \nabla ]\theta\|_{L^p} \dd s. \]
 By Proposition \ref{wx-prop42}, 
 \[
\left\|\left[\myR, u \cdot \nabla\right] \theta\right\|_{B_{p, 1}^{0}} \lesssim \|\nabla u\|_{L^p}\left(\|\theta\|_{B_{\infty, 1}^{1-2\alpha }}+\|\theta\|_{L^{2}}\right) \lesssim \|\omega\|_{L^p}(\|\theta_0\|_{L^2}+\|\theta_0\|_{L^\infty}).
\]
Therefore, using $\Gamma = \omega - \myR \theta$ and the boundedness of $\myR$,
\[ \|\omega(t)\|_{L^p} \lesssim_{\theta_0,\omega_0} 1+ \int_0^t \|\omega(s)\|_{L^p} \dd s, \]
and Gronwall's inequality completes the proof.
\end{proof}

The following result is the analogue of Theorem 6.3 in \cite{hmidi2009global}.
\begin{thm}\label{hkr-6.3}
    Let $(u,\theta)$ solve \eqref{the-eqn} with initial data $u_0 \in H^1 \cap \dot W^{1,p}$, $\nabla\cdot u=0$, and $\theta_0 \in L^2\cap L^\infty$ with $p\in(2,\infty)$.  Then for $r \in [2,p]$ and $\rho\in[1,r /2)$,
\begin{align}
    \|\omega - \myR \theta\|_{\widetilde L^\rho_t B^{4\alpha /r}_{r,1}} \le \Phi_1(t).
\end{align}
\end{thm}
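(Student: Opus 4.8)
The plan is to run the standard ``$\Gamma$-trick'' parabolic-smoothing argument one Littlewood--Paley block at a time and then reassemble the blocks in the tilde norm. Applying $\Delta_q$ to the equation $(\partial_t + u\cdot\nabla + \Lambda^{2\alpha})\Gamma = [\myR, u\cdot\nabla]\theta$ for $\Gamma = \omega - \myR\theta$ and commuting the transport term, I would write
\[ (\partial_t + u\cdot\nabla + \Lambda^{2\alpha})\Delta_q\Gamma = \Delta_q[\myR, u\cdot\nabla]\theta - [\Delta_q, u\cdot\nabla]\Gamma. \]
Since $u$ is divergence free the transport term drops out of the $L^r$ energy balance, and the C\'ordoba--C\'ordoba positivity inequality (the same tool behind Proposition \ref{thm-omega-lp-estimate}) gives the frequency-localized dissipation $\tfrac{d}{dt}\|\Delta_q\Gamma\|_{L^r} + c\,2^{2\alpha q}\|\Delta_q\Gamma\|_{L^r}\lesssim \|\Delta_q[\myR, u\cdot\nabla]\theta\|_{L^r} + \|[\Delta_q, u\cdot\nabla]\Gamma\|_{L^r}$. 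Integrating Duhamel-style bounds $\|\Delta_q\Gamma(t)\|_{L^r}$ by $e^{-ct2^{2\alpha q}}\|\Delta_q\Gamma_0\|_{L^r}$ plus the time-convolution of the two forcing terms against the kernel $e^{-c(t-s)2^{2\alpha q}}$.

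Next I would take the $L^\rho_t$ norm of each block and sum against the weight $2^{4\alpha q/r}$ in $\ell^1_q$; this is precisely why the tilde space $\widetilde L^\rho_t B^{4\alpha/r}_{r,1}$ is the right target, since the time integration sits inside the frequency sum. By Young's convolution inequality the exponential kernel contributes a negative power of the frequency, of size between $2^{-2\alpha q}$ and $2^{-2\alpha q/\rho}$ according to how the time exponents are split. Combined with the weight $2^{4\alpha q/r}$, the hypothesis $\rho < r/2$ guarantees a strictly negative net power $2^{-q(2\alpha/\rho - 4\alpha/r)}$, which is what makes the $\ell^1_q$ sum converge and, for the homogeneous piece, reduces the initial-data contribution to a finite multiple of $\|\Gamma_0\|_{L^2\cap L^r}$ (finite because $\omega_0\in L^2\cap L^p$ and $\myR\theta_0$ inherits $L^2\cap L^r$-integrability from the mapping properties in Proposition \ref{prop-wx-4-1}, the single low-frequency block $\Delta_{-1}\Gamma_0$ being absorbed through the $L^\infty_t L^2$ bound of Proposition \ref{prop-basic-gamma-est}).

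The forcing from the $\myR$-commutator is the easy term: Proposition \ref{wx-prop42} with $\beta = 2\alpha$ and Besov summation index $1$ bounds $\|[\myR, u\cdot\nabla]\theta\|_{B^0_{r,1}}\lesssim\|\nabla u\|_{L^r}\big(\|\theta\|_{B^{1-2\alpha}_{\infty,1}}+\|\theta\|_{L^2}\big)\lesssim\|\omega\|_{L^r}\big(\|\theta_0\|_{L^\infty}+\|\theta_0\|_{L^2}\big)$, which is $\Phi_1(t)$-controlled by Proposition \ref{thm-omega-lp-estimate}. Here, using the $L^1_t$ kernel bound $2^{-2\alpha q}$, the residual weight $2^{q(4\alpha/r - 2\alpha)}\le 1$ since $r\ge 2$, so this contribution sums to $\lesssim\|[\myR,u\cdot\nabla]\theta\|_{L^\rho_t B^0_{r,1}}\le\Phi_1(t)$.

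The transport commutator $[\Delta_q, u\cdot\nabla]\Gamma$ is the genuine obstacle, because only $\nabla u\in L^\infty_t L^p$ is available rather than $L^\infty_t L^\infty$. I would estimate it with the standard divergence-free commutator inequality, splitting the two factors by H\"older as $\tfrac1r = \tfrac1p + \tfrac1m$ so that $\nabla u$ sits in $L^p$ (controlled by $\|\omega\|_{L^p}\le\Phi_1(t)$) and $\Gamma$ in a Besov space of integrability $m>r$. The delicate point is that passing from integrability $r$ to $m$ costs regularity $2/p$ through Bernstein, so the argument closes only if the leftover dissipative gain $2\alpha/\rho - 4\alpha/r>0$, together with a careful choice of the Young time-exponents, dominates this loss and lands $\Gamma$ in a (possibly negative-regularity) Besov norm already controlled by interpolating the a priori bounds $\Gamma\in L^\infty_t L^2\cap L^2_t\dot H^{\alpha}$ (Proposition \ref{prop-basic-gamma-est}) against $\omega\in L^\infty_t L^p$ (Proposition \ref{thm-omega-lp-estimate}). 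Balancing these exponents so that the commutator is paired with $\|\nabla u\|_{L^p}$ and integrated in $L^1_t$ against an a-priori-bounded norm of $\Gamma$ is where the bulk of the bookkeeping lies; once it is done, the estimate either closes directly against the previously established bounds or, if a self-referential term survives, via a final Gronwall, producing the single-exponential bound $\Phi_1(t)$.
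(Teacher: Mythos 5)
Your overall architecture is the paper's: localize the $\Gamma$-equation with $\Delta_q$, use the generalized Bernstein (C\'ordoba--C\'ordoba) lower bound to produce the factor $c2^{2\alpha q}$, run Duhamel and Young's inequality in time, and sum the blocks against the weight $2^{4\alpha q/r}$; the forcing $\Delta_q[\myR,u\cdot\nabla]\theta$ is handled exactly as in the paper via Proposition \ref{wx-prop42} and Proposition \ref{thm-omega-lp-estimate}. The gap is in the transport commutator, which you correctly identify as the crux but do not close, and the two closure mechanisms you float would both fail. First, the H\"older splitting $\frac1r=\frac1p+\frac1m$ is unnecessary: since $r\in[2,p]$ you can interpolate Propositions \ref{prop-basic-gamma-est} and \ref{thm-omega-lp-estimate} to get $\|\nabla u\|_{L^r}\le\Phi_1(t)$ directly, and Lemma \ref{hkr-6.4} then gives $\|[\Delta_q,u\cdot\nabla]\Gamma\|_{L^r}\lesssim\|\nabla u\|_{L^r}\|\Gamma\|_{B^{2/r}_{r,1}}$ uniformly in $q$ (your splitting lands in the same norm anyway, since Bernstein converts $B^{2/m}_{m,1}$ back into $B^{2/m+2/p}_{r,1}=B^{2/r}_{r,1}$). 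Second, and more seriously, $\|\Gamma\|_{L^1_tB^{2/r}_{r,1}}$ is \emph{not} recoverable by interpolating the a priori bounds $\Gamma\in L^\infty_tL^2\cap L^2_t\dot H^\alpha$ against $\omega\in L^\infty_tL^p$: already for $r=2$ you would need $B^1_{2,1}$ spatial regularity, which exceeds the $\dot H^\alpha$ ($\alpha<1$) that is available. Nor does a terminal Gronwall apply, because $\int_0^t\|\Gamma(s)\|_{B^{2/r}_{r,1}}\,\dd s$ is not controlled pointwise in time by the target $\widetilde L^\rho_t$ norm.

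The missing idea is an absorption argument in the frequency sum. Split the sum over $q$ at a threshold $N$. For $q\ge N$ the commutator contribution carries the weight $2^{2\alpha q(\frac2r-\frac1\rho)}\le 2^{-\alpha N(\frac1\rho-\frac2r)}$ --- this is where the hypothesis $\rho<r/2$ is used a second time --- so it is bounded by $2^{-\alpha N(\frac1\rho-\frac2r)}\Phi_1(t)\bigl(1+\|\Gamma\|_{\widetilde L^\rho_tB^{4\alpha/r}_{r,1}}\bigr)$ after replacing $B^{2/r}_{r,1}$ by the stronger norm $B^{4\alpha/r}_{r,1}$ (legitimate since $\alpha>1/2$). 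For $q<N$ one bounds crudely by $2^{4\alpha N/r}t^{1/\rho}\|\Gamma\|_{L^\infty_tL^r}\le 2^{4\alpha N/r}\Phi_1(t)$. Choosing $N$ large depending on $\Phi_1(t)$ then absorbs the self-referential term into the left-hand side. Without this step your estimate does not close.
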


\begin{proof}
    
For \(q \in \mathbb{N}\) we set \(\Gamma_{q}=\Delta_{q} \Gamma .\) Then, we localise in frequencies the equation (5.1) for \(\Gamma\) to get
\[
\begin{aligned}
\partial_{t} \Gamma_{q}+u\cdot \nabla \Gamma_{q}+\Lambda^{2\alpha} \Gamma_{q} 
&=-\left[\Delta_{q}, u \cdot \nabla\right] \Gamma+\Delta_{q}([\myR, u \cdot \nabla] \theta) \\
&:=f_{q}.
\end{aligned}
\]
Multiplying the above equation by \(\left|\Gamma_{q}\right|^{r-2} \Gamma_{q}\) and integrating in the space variable we find
\[
\frac{1}{r} \dv{}t\left\|\Gamma_{q}(t)\right\|_{L^{r}}^{r}+\int_{\mathbb{R}^{2}}\left(\Lambda^{2\alpha} \Gamma_{q}\right)\left|\Gamma_{q}\right|^{r-2} \Gamma_{q} \dd x \leq\left\|\Gamma_{q}(t)\right\|_{L^{r}}^{r-1}\left\|f_{q}(t)\right\|_{L^{r}}.
\]
From \cite{chen2007newbernstein} or \cite{li2013frequency}, we have the following generalised Bernstein inequality,
\[
\int_{\mathbb{R}^{d}}\left(\Lambda^{2\alpha} \Gamma_{q}\right)\left|\Gamma_{q}\right|^{r-2} \Gamma_{q} \dd x \geq c 2^{2\alpha q}\left\|\Gamma_{q}\right\|_{L^{r}}^{r},
\]
valid for some \(c>0\) independent of \(q\), and hence we find
\[
\frac{1}{r} \dv{}t\left\|\Gamma_{q}(t)\right\|_{L^{r}}^{r}+c 2^{2\alpha q}\|\Gamma_q(t)\|_{L^{r}}^{r} \leq\left\|\Gamma_{q}(t)\right\|_{L^{r}}^{r-1}\left\|f_{q}(t)\right\|_{L^{r}}.
\]
This yields (writing $\Gamma_0:= \omega_0 - \myR \theta_0 $ and $(\Gamma_0)_{q}:=\Delta_q \Gamma_0$) 
\[
\left\|\Gamma_{q}(t)\right\|_{L^{r}} \leq e^{-c t 2^{2\alpha q}}\left\|(\Gamma_0)_{q}\right\|_{L^{r}}+\int_{0}^{t} e^{-c(t-\tau) 2^{2\alpha q}}\left\|f_{q}(\tau)\right\|_{L^{r}} \dd \tau.
\]
By taking the \(L^{\rho}[0, t]\) norm and by using convolution inequalities, we find (since $f_q = -\left[\Delta_{q}, u \cdot \nabla\right] \Gamma+\Delta_{q}([\myR, u \cdot \nabla] \theta) $, and $\rho\in[1,p/2]$)
\begin{align}
    &2^{2\alpha q \frac{2}{r}}\left\|\Gamma_{q}\right\|_{L_{t}^{\rho} L^{r}} 
   \\ &\lesssim_p 2^{2\alpha q\left(\frac{2}{r}-\frac{1}{\rho}\right)}\left\|(\Gamma_0)_{q}\right\|_{L^{r}}+2^{2\alpha q\left(\frac{2}{r}-\frac{1}{\rho}\right)} 
    \int_{0}^{t}\left\|\left[\Delta_{q}, u \cdot \nabla\right] \Gamma(\tau)\right\|_{L^{r}} \dd \tau
    \\
    &\quad +2^{2\alpha q\left(\frac{2}{r}-\frac{1}{\rho}\right)} \int_{0}^{t}\left\|\Delta_{q}([\myR, u \cdot \nabla] \theta)(\tau)\right\|_{L^{r}} \dd\tau \label{Gammaq-need-to-sum}.
\end{align}
To estimate the second integral of the RHS of \eqref{Gammaq-need-to-sum}, we use 
Proposition \ref{wx-prop42}.
This gives uniformly in $q\ge 0$, 
\begin{align}
    \left\|\Delta_q \left[\myR, u \cdot \nabla\right] \theta\right\|_{L^{r}}    
    &\le \| \left[\myR, u \cdot \nabla\right] \theta \|_{B^0_{r,\infty}} 
    \\
    &\leq \|\nabla u\|_{L^r} (\|\theta\|_{B_{\infty,\infty}^{1-2\alpha }} + \|\theta\|_{L^2})
    \\
    &\le \Phi_1(t). 
\end{align}
For the first integral of the RHS of  \eqref{Gammaq-need-to-sum} we use the following lemma:

\begin{lem}[Lemma 6.4 of \cite{hmidi2009global}]\label{hkr-6.4} Let \(v\) be a smooth divergence-free vector field and \(f\) be a smooth scalar function. Then, for all \(a \in[1, \infty]\) and \(q \geq-1\),
\[
\left\|\left[\Delta_{q}, v \cdot \nabla\right] f\right\|_{L^{a}} \lesssim\|\nabla v\|_{L^{r}}\|f\|_{B_{a, 1}^{2/a}}.
\]
\end{lem}
Using Lemma \ref{hkr-6.4}, the $L^2$ bound (Proposition \ref{prop-basic-gamma-est}), and the $L^p$ bound on $\omega$ (Proposition \ref{thm-omega-lp-estimate}), we can interpolate (recall $r\in[2,p]$) to bound $\|\nabla v\|_{L^r} \le \Phi_1(t)$, giving
\begin{align}
    \int_0^t \|[\Delta_q ,u\cdot \nabla ] \Gamma\|_{L^r}\dd \tau  \le \|\nabla u\|_{L^r} \|\Gamma\|_{B_{r, 1}^{2/r}} \le \Phi_1(t)  \|\Gamma\|_{L^1 B_{r, 1}^{2/r}} 
\end{align}
Now we show how to estimate the sum in $q\ge-1$ of \eqref{Gammaq-need-to-sum}. For high frequencies, since $r>2\rho$ i.e. $\frac2r - \frac1\rho < 0$, we have
\begin{align}
    \sum_{q\ge N} 2^{2\alpha q{ (\frac 2r - \frac 1\rho )}} \int_0^t \| [\Delta_q , u\cdot \nabla ] \Gamma \|_{L^r} \dd\tau \le 2^{-\alpha N(\frac 1\rho -\frac 2r )} \|\Gamma\|_{L^1 B^{2/ r}_{r,1}} ,
\end{align}
so that by \eqref{Gammaq-need-to-sum}, for an $N$ to be chosen,
\begin{align}
    &\sum_{q\ge N}  2^{2\alpha q \frac{2}{r}}\left\|\Gamma_{q}\right\|_{L_{t}^{\rho} L^{r}} 
    \\ &\le \underbrace{\|\Gamma_0\|_{B^{2\alpha ( \frac2r - \frac1\rho)}_{r,1}} }_{ \le \|\Gamma_0\|_{L^r}}
    + 2^{-\alpha N ( \frac1\rho - \frac2 r)} \Phi_1(t) (1+ \|\Gamma\|_{L^1_t B^{2/r}_{r,1}}).
\end{align}
(The $1$ and $\|\Gamma\|_{L^1_tB^{2/r}_{r,1}}$ is from the first and second integral terms, respectively.) For low frequencies of $\Gamma$, we just use
\begin{align}
    \sum_{q<N} 2^{2\alpha q \frac2r} \|\Gamma_q\|_{L^\rho_tL^r_x} \le 2^{\frac2r 2\alpha N} \|\Gamma\|_{L^\rho_t L^r_x}  \le 2^{\frac2r 2\alpha N} \|\Gamma\|_{L^\infty_t L^r_x} t^{1/\rho} \le 2^{\frac2r 2\alpha N}  \Phi_1(t).
\end{align}
replacing $\|\Gamma\|_{B^{2/r}_{r,1}}$ on the right by the worse term $\|\Gamma\|_{B^{4\alpha /r}_{r,1}}$, we see that
\begin{align}
    \|\Gamma\|_{\widetilde L^\rho B^{4\alpha/r}_{r,1}} 
     &\le  2^{-\alpha N ( \frac1\rho - \frac2 r)} \Phi_1(t) (1+ \|\Gamma\|_{L^1_t B^{4\alpha /r}_{r,1}}) + 2^{\frac2r 2\alpha N}  \Phi_1(t)
    \\&\le  2^{-\alpha N ( \frac1\rho - \frac2 r)} \Phi_1(t) (1+ \|\Gamma\|_{\widetilde L^\rho_t B^{4\alpha /r}_{r,1}}) + 2^{\frac2r 2\alpha N}  \Phi_1(t).
\end{align}
Taking $N\gg 1$, we obtain $\|\Gamma\|_{\widetilde L^\rho B^{4\alpha/r}_{r,1}}  \le \Phi_1(t)$ as claimed.
\end{proof}
\begin{prop}\label{prop-hkr66} Let $(u,\theta)$ be a smooth solution of \eqref{the-eqn}, and let $u_0 \in H^1\cap \dot W^{1,p}$, with $p\in(2,\infty)$, $\nabla\cdot u_0 = 0$, and $\theta_0 \in L^2\cap L^\infty$. Then we have
\begin{align}
    \|u\|_{\widetilde L_t^\rho B^{\frac2r(2\alpha-1)+1}_{\infty,1} }
      \le \Phi_1(t),
\end{align}
for every $\rho \in [1,2\wedge \frac p2)$ and  $r>2$. 

\end{prop}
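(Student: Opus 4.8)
The plan is to reconstruct $u$ from $\omega$ by the Biot--Savart law $u=\nabla^\perp(-\Delta)^{-1}\omega$ and to exploit the splitting $\omega=\Gamma+\myR\theta$, using the sharp bound on $\Gamma$ from Theorem \ref{hkr-6.3} together with the explicit smoothing of $\myR$ from Proposition \ref{prop-wx-4-1}. The key observation is that for $q\ge 0$ the Biot--Savart multiplier is homogeneous of order $-1$ and acts on the block $\Delta_q$ as a gain of $2^{-q}$, uniformly in $t$, so $\|\Delta_q u\|_{L^a}\lesssim 2^{-q}\|\Delta_q\omega\|_{L^a}$; the single low-frequency block $\Delta_{-1}u$ is harmless since Bernstein bounds it by $\|u\|_{L^2}\le\Phi_1(t)$ from Proposition \ref{prop-basic-estimates}. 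Thus I only need to control the high-frequency part of $u$ generated by $\Gamma$ and by $\myR\theta$ separately, after which summation over $q$ builds the claimed $\widetilde L^\rho_t B^{\cdot}_{\infty,1}$ norm.

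For the $\Gamma$-part, Theorem \ref{hkr-6.3} gives $\|\Gamma\|_{\widetilde L^\rho_t B^{4\alpha/r}_{r,1}}\le\Phi_1(t)$ whenever $r\in[2,p]$ and $\rho<r/2$. The $2^{-q}$ gain from Biot--Savart raises the smoothness index by one, placing this contribution in $\widetilde L^\rho_t B^{4\alpha/r+1}_{r,1}$, and then the $2$D Besov embedding to $L^\infty$ integrability, realised block-wise by $\|\Delta_q f\|_{L^\infty}\lesssim 2^{2q/r}\|\Delta_q f\|_{L^r}$, costs $2/r$ derivatives. Both steps merely reweight the Littlewood--Paley sum pointwise in $(t,q)$, so they are inherited by the Tilde space, and the resulting index is $\tfrac{4\alpha}{r}+1-\tfrac{2}{r}=\tfrac{2}{r}(2\alpha-1)+1$, exactly the target. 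Hence this contribution is $\le\Phi_1(t)$.

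For the $\myR\theta$-part I drop the commutator structure and use Proposition \ref{prop-wx-4-1}, which shows that on the block $\Delta_q$ the operator $\myR$ is bounded on $L^\infty$ with norm $\lesssim 2^{q(1-2\alpha)}$; together with the Biot--Savart gain this yields, writing $u^{(2)}$ for the part of $u$ coming from $\myR\theta$, the bound $\|\Delta_q u^{(2)}\|_{L^\infty}\lesssim 2^{-2\alpha q}\|\theta\|_{L^\infty}\lesssim 2^{-2\alpha q}\|\theta_0\|_{L^\infty}$, uniformly in $t$ by Proposition \ref{prop-basic-estimates}. Summing the weighted blocks, $\sum_q 2^{Sq}\|\Delta_q u^{(2)}\|_{L^\infty}\lesssim\|\theta_0\|_{L^\infty}\sum_q 2^{(S-2\alpha)q}$ converges precisely when the target index $S=\tfrac2r(2\alpha-1)+1$ satisfies $S<2\alpha$, i.e. exactly when $r>2$; this is where the hypothesis $r>2$ is used. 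As the bound is uniform in time, $L^\infty_t\hookrightarrow L^\rho_t$ on $[0,t]$ produces the $\widetilde L^\rho_t$ control.

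The only genuine difficulty is the bookkeeping of indices and matching the parameters to the hypotheses of Theorem \ref{hkr-6.3}. One must simultaneously ensure that $S=\tfrac2r(2\alpha-1)+1$ is reachable from the $B^{4\alpha/r}_{r,1}$ bound (requiring the Theorem's exponent to be taken as $r$, hence $r\le p$ and $\rho<r/2$) and that $S<2\alpha$ so that the rougher $\myR\theta$ term still sums in $\ell^1_q$ (requiring $r>2$). For exponents $r$ beyond the immediate range---in particular $r>p$---I would reduce to the admissible endpoint by the monotone embedding $B^{s_1}_{\infty,1}\hookrightarrow B^{s_2}_{\infty,1}$ with $s_1\ge s_2$, which lowers $S$ as $r$ increases. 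I expect this parameter-matching against the range $\rho\in[1,2\wedge\tfrac p2)$, rather than any single analytic estimate, to be the main obstacle, since each individual bound follows directly from the results already established.
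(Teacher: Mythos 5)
Your proposal is correct and follows essentially the same route as the paper: split $\omega=\Gamma+\myR\theta$, control $\Gamma$ via Theorem \ref{hkr-6.3} plus the Besov embedding costing $2/r$ derivatives, control $\myR\theta$ blockwise using the $2^{(1-2\alpha)q}$ gain of $\myR$ with $r>2$ ensuring $\ell^1_q$ summability, handle $\Delta_{-1}u$ by Bernstein and the $L^2$ energy bound, and lift to $u$ via Biot--Savart. Even your closing remark about adjusting $r$ by the monotone embedding mirrors the paper's ``without loss of generality, make $r$ smaller so that $r\le p$.''
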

\begin{proof}The result is stronger for $r$ closer to 2, so without loss of generality, make $r$ smaller so that $r\le p$ (this is for the application of Theorem \ref{hkr-6.3}). By triangle inequality and $\omega = \Gamma +\myR\theta$,
\begin{align}
    \|\omega\|_{\widetilde L_t^\rho B^{\frac2r(2\alpha-1)}_{\infty,1}} \le \|\Gamma\|_{\widetilde L_t^\rho B^{\frac2r(2\alpha-1)}_{\infty,1}} + \|\myR\theta\|_{\widetilde L_t^\rho B^{\frac2r(2\alpha-1)}_{\infty,1}}.
\end{align}
The first term is controlled by some $\Phi_1$ by the embedding $\widetilde L^\rho B^{4\alpha/r}_{r,1} \hookrightarrow \widetilde L^\rho B^{\frac2r(2\alpha-1)}_{\infty,1}     $ and the previous result, Proposition \ref{hkr-6.4}. Recalling that $\myR = \Lambda^{1-2\alpha } \mathcal R$, we can use Bernstein inequalities to see that
\begin{align}
    \|\myR\theta\|_{\widetilde L^\rho B^{\frac2r(2\alpha-1)}_{\infty,1}} 
    &= \sum_{q\ge-1} 2^{\frac2r(2\alpha-1)q}\|\Delta_q \myR \theta\|_{L^\rho _t L^\infty}
    \\
    &\lesssim \|\myR \Delta_{-1}\theta\|_{L^\rho_tL^\infty} + \sum_{q\ge 0} 2^{\frac2r(2\alpha-1)q-(2\alpha-1)q}\|\Delta_q\theta\|_{L^\rho_tL^\infty}
    \\
    &\lesssim \| \Delta_{-1}\theta\|_{L^\rho_tL^2} + \sum_{q\ge 0} 2^{(\frac2r-1)(2\alpha-1)q}\|\theta\|_{L^\rho _tL^\infty }
    \\
    &\le t^{1/\rho} \|\theta_0\|_{L^2} + t^{1/\rho}\|\theta_0\|_{L^\infty}\sum_{q\ge0}2^{(\frac 2r-1)(2\alpha-1)q},
\end{align}
where we have used that $\|\theta(t)\|_{L^p}\le\|\theta_0\|_{L^p}$. Since $r>2$,
\begin{align}
 \sum_{q\ge0}2^{(\frac 2r-1)(2\alpha-1)q} < \infty.
\end{align}
Hence, $\|\myR \theta\|_{\widetilde L^\rho B^{\frac 2r (2\alpha - 1) }_{\infty,1}} \lesssim_{\theta_0} t^{1/\rho}$, and therefore $\|\omega\|_{ \widetilde L_t^\rho  B^{\frac2r(2\alpha-1)}_{\infty,1} } \le \Phi_1(t). $
Then we obtain (using Bernstein's inequality for the low frequency term) 
\begin{align}
    \|u\|_{\widetilde L^\rho B^{\frac2r(2\alpha-1)+1}_{\infty,1}} &\lesssim \|\Delta_{-1} u\|_{\widetilde L^\rho  L^\infty} + \|\omega\|_{\widetilde L^\rho B^{\frac2r(2\alpha-1)}_{\infty,1}}
    \\
   & \lesssim 1+t    + \|\omega\|_{\widetilde L^\rho  B^{\frac2r(2\alpha-1)}_{\infty,1}}\\
   & \le \Phi_1(t).
\end{align}
 \end{proof}

\section{Uniqueness of solutions}
\label{sectionUniqueness}
In order to prove the uniqueness, we will rely on the following Osgood lemma, which can be found as Theorem 5.2.1 in \cite{chemin1998perfect}, or in \cite{elgindi2014osgood}:

\begin{lem}[Osgood Lemma]\label{lem-osgood} Let $\gamma \in L_{\mathrm{loc}}^{1}\left(\mathbb{R}_{+} ; \mathbb{R}_{+}\right), \mu$ a continuous non-decreasing function, $a \in \mathbb{R}_{+}$ and $\alpha$ a measurable function satisfying
\[
0 \leq \alpha(t) \leq a+\int_{0}^{t} \gamma(\tau) \mu(\alpha(\tau)) \dd \tau, \quad \forall t \in \mathbb{R}_{+}
\]
If we assume that $a>0$ then
\[
-\mathcal{M}(\alpha(t))+\mathcal{M}(a) \leq \int_{0}^{t} \gamma(\tau) \dd\tau \quad \text { with } \quad \mathcal{M}(x):=\int_{x}^{1} \frac{\dd r}{\mu(r)}.
\]
If we assume $a=0$ and $\lim _{x \rightarrow 0^{+}} \mathcal{M}(x)=+\infty,$ then $\alpha(t)=0, \forall t \in \mathbb{R}_{+}$.
\end{lem}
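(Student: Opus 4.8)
The plan is to reduce the statement to a differential inequality for the \emph{majorant}
\[ R(t) \coloneq a + \int_0^t \gamma(\tau)\mu(\alpha(\tau))\dd\tau, \]
and then integrate that inequality against the function $\mathcal M$. By hypothesis the right-hand side above is finite, so the integrand $\gamma\,\mu(\alpha)$ is locally integrable; hence $R$ is absolutely continuous, $R(0)=a$, and $R'(t)=\gamma(t)\mu(\alpha(t))$ for a.e.\ $t$. The point of introducing $R$ is twofold: first $\alpha(t)\le R(t)$ by assumption, and second $R$ — unlike the merely measurable $\alpha$ — is regular enough to differentiate. Since $\mu$ is non-decreasing and $\alpha\le R$, we get $\mu(\alpha(t))\le\mu(R(t))$, whence $R'(t)\le \gamma(t)\mu(R(t))$ a.e.

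Suppose first that $a>0$. Then $R(t)\ge a>0$ for every $t$, and $\mu(R(t))\ge\mu(a)>0$ since $\mu$ is non-decreasing (positivity of $\mu$ on $[a,1]$ is implicit in $\mathcal M(a)$ being finite), so we may divide by $\mu(R(t))$. Because $\mathcal M'(x)=-1/\mu(x)$, with $\mathcal M$ of class $C^1$ and $R$ absolutely continuous, the chain rule for absolutely continuous functions gives, for a.e.\ $t$,
\[ \dv{}{t}\mathcal M(R(t)) = -\frac{R'(t)}{\mu(R(t))} \ge -\gamma(t). \]
Integrating from $0$ to $t$ and using $R(0)=a$ yields $\mathcal M(R(t))-\mathcal M(a)\ge -\int_0^t\gamma(\tau)\dd\tau$. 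Finally, since $\mathcal M$ is non-increasing (as $\mathcal M'\le 0$) and $\alpha(t)\le R(t)$, we have $\mathcal M(\alpha(t))\ge \mathcal M(R(t))$, so
\[ -\mathcal M(\alpha(t))+\mathcal M(a)\le -\mathcal M(R(t))+\mathcal M(a)\le \int_0^t\gamma(\tau)\dd\tau, \]
which is the claimed bound.

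For the case $a=0$ I would argue by approximation. For every $\varepsilon>0$ the hypothesis continues to hold with $a$ replaced by $\varepsilon$ (the majorant only increases), so the first part applies and gives $\mathcal M(\varepsilon)\le \mathcal M(\alpha(t))+\int_0^t\gamma(\tau)\dd\tau$. If $\alpha(t)>0$ at some $t$, then $\mathcal M(\alpha(t))$ is finite, while the left-hand side tends to $+\infty$ as $\varepsilon\to 0^+$ by the assumption $\lim_{x\to0^+}\mathcal M(x)=+\infty$; letting $\varepsilon\to0^+$ produces a contradiction. Hence $\alpha(t)=0$ for every $t$.

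The main obstacle is not the algebra but the measure-theoretic bookkeeping: $\alpha$ is only measurable, so one cannot differentiate $\mathcal M(\alpha(t))$ directly — this is precisely why the bound must be proved for the absolutely continuous majorant $R$ and then transported back to $\alpha$ through the monotonicity of $\mathcal M$. The two points needing care are (i) that $R$ is absolutely continuous with $R'=\gamma\mu(\alpha)$ a.e., which follows from local integrability of the integrand together with the Lebesgue differentiation theorem, and (ii) the validity of $(\mathcal M\circ R)'=\mathcal M'(R)\,R'$ a.e., which holds because $\mathcal M$ is $C^1$ (as $1/\mu$ is continuous and positive on $[a,1]$) and $R$ is absolutely continuous. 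Everything else is monotonicity.
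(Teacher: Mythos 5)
Your argument is correct, and it is essentially the standard proof of Osgood's lemma: pass to the absolutely continuous majorant $R$, observe $R'\le\gamma\,\mu(R)$ a.e., integrate $-\dv{}{t}\mathcal M(R)\le\gamma$, and transport the bound back to $\alpha$ by monotonicity of $\mathcal M$; the $a=0$ case then follows by letting $\varepsilon\to0^+$. The paper itself gives no proof --- it cites Theorem 5.2.1 of Chemin's book and Elgindi's notes --- and your write-up matches the argument found there, with the right emphasis on why one must differentiate $R$ rather than the merely measurable $\alpha$. One small imprecision: finiteness of $\mathcal M(a)$ does not by itself force $\mu(a)>0$ (e.g.\ $\mu(r)=\sqrt{r-a}$ near $a$ gives a convergent integral with $\mu(a)=0$), so the division by $\mu(R(t))$ really rests on the standing assumption, implicit in any usable statement of the lemma, that $\mu>0$ on $(0,\infty)$; this is how the hypothesis is stated in the cited sources and is satisfied by the $\mu(x)=x\log(e+1/x)$ used in Section 6.
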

\begin{rem}\label{rem-explicit-bd}
In the case $\mu(x) = x(1-\log x)$, an explicit bound is also given in \cite{chemin1998perfect}. We will use the similar function $\mu(x):= x \log(e+1/x)$, for which in the case $a<1/e$, we have the estimate
\begin{align}
    \alpha(t) \le \exp\left[-\exp\left (e-1+\mathcal M(a) - \int_0^t \gamma \right)\right]. \label{eqn-explicit-bd}
\end{align}
This estimate follows elementarily from $\mu(r) \ge -r \log r$ for $0<r<1/e$. In particular, note that $\alpha(t) \to 0$ as $a\to 0$. 
\end{rem} We now recall some  results (with mild modifications) from Section 4 of \HKR's paper \cite{hmidi2009global}. 
\subsection{Estimates for transport-diffusion models}
First, we have a result for transported scalars, which we will apply to the temperature $\theta$ of our system.
\begin{prop}\label{prop-linear-scalar-estimate}
    Let $v$ be a smooth divergence-free vector field. Then every scalar solution $\psi$ of the equation
    \[ \partial_t \psi + v \cdot \nabla \psi = f,\quad  \psi|_{t=0} = \psi_0,\]
    satisfies for every $p\in[1,\infty]$,
    \begin{align}
        \|\psi(t)\|_{B^{-1}_{p,\infty}} \le Ce^{ C \int_0^t \| v(\tau)\|_{B^1_{\infty,1}} \dd\tau  } \left( \|\psi_0\|_{B^{-1}_{p,\infty}} + \int_0^t \|f(\tau)\|_{B^{-1}_{p,\infty}} \dd\tau \right) .
    \end{align}
\end{prop}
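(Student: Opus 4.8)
The plan is to localise the transport equation in frequency and estimate each Littlewood--Paley block in $L^p$, then reassemble the pieces in the $B^{-1}_{p,\infty}$ norm via a Gronwall-type argument. First I would apply $\Delta_q$ to the equation to obtain
\[
\partial_t \Delta_q \psi + v\cdot\nabla \Delta_q \psi = \Delta_q f - [\Delta_q, v\cdot\nabla]\psi.
\]
Since $v$ is divergence-free, the transport operator $\partial_t + v\cdot\nabla$ preserves $L^p$ norms along the flow; multiplying by $|\Delta_q\psi|^{p-2}\Delta_q\psi$ and integrating (the standard $L^p$ energy estimate for transport equations, as in the Cordoba--Cordoba-type argument already used for Proposition \ref{thm-omega-lp-estimate}), the transport term drops out and I get
\[
\frac{\dd}{\dd t}\|\Delta_q\psi(t)\|_{L^p} \le \|\Delta_q f(t)\|_{L^p} + \big\|[\Delta_q, v\cdot\nabla]\psi(t)\big\|_{L^p}.
\]

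The key ingredient is the commutator estimate. I would invoke Lemma \ref{hkr-6.4} with $a=p$, which gives $\|[\Delta_q, v\cdot\nabla]\psi\|_{L^p} \lesssim \|\nabla v\|_{L^p}\|\psi\|_{B^{2/p}_{p,1}}$; however, to close in the $B^{-1}_{p,\infty}$ norm with the weight $\|v\|_{B^1_{\infty,1}}$ appearing, the cleaner route is the standard commutator bound used for $B^{-1}_{p,\infty}$ transport estimates, namely $2^{-q}\|[\Delta_q, v\cdot\nabla]\psi\|_{L^p} \lesssim \|v\|_{B^1_{\infty,1}}\|\psi\|_{B^{-1}_{p,\infty}}$, which follows from Bony's decomposition exactly as in Lemma 2.100 of \cite{bahouri2011fourier}. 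Multiplying the differential inequality by $2^{-q}$, taking the supremum over $q$, and integrating in time then yields
\[
\|\psi(t)\|_{B^{-1}_{p,\infty}} \le \|\psi_0\|_{B^{-1}_{p,\infty}} + \int_0^t \|f(\tau)\|_{B^{-1}_{p,\infty}}\dd\tau + C\int_0^t \|v(\tau)\|_{B^1_{\infty,1}}\|\psi(\tau)\|_{B^{-1}_{p,\infty}}\dd\tau.
\]

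Finally I would apply the classical Gronwall inequality to the quantity $\|\psi(t)\|_{B^{-1}_{p,\infty}}$, absorbing the last term and producing the exponential factor $\exp(C\int_0^t\|v\|_{B^1_{\infty,1}})$ multiplying the data and the forcing. The main obstacle is getting the commutator estimate in precisely the form that produces the weight $\|v\|_{B^1_{\infty,1}}$ rather than $\|\nabla v\|_{L^\infty}$ or $\|\nabla v\|_{L^p}$; the index $B^1_{\infty,1}$ is exactly the regularity threshold at which $\nabla v\in L^\infty$ together with a summable frequency decomposition makes the high-high and low-high paraproduct pieces of $[\Delta_q, v\cdot\nabla]\psi$ bounded uniformly after the $2^{-q}$ rescaling. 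Everything else is routine, and the divergence-free condition is essential only to kill the transport term in the $L^p$ energy step.
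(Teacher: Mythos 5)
Your proposal is correct and is essentially the argument the paper relies on: the paper states this proposition without proof, recalling it from Section 4 of Hmidi--Keraani--Rousset \cite{hmidi2009global}, whose proof is exactly your frequency localisation, the $L^p$ transport estimate on each block (using $\nabla\cdot v=0$), the limit-case commutator bound $2^{-q}\|[\Delta_q,v\cdot\nabla]\psi\|_{L^p}\lesssim \|v\|_{B^1_{\infty,1}}\|\psi\|_{B^{-1}_{p,\infty}}$, and Gronwall. You also correctly identify why the weight must be $\|v\|_{B^1_{\infty,1}}$: since $s=-1$ sits at the endpoint of the admissible range for the commutator estimate with $r=\infty$, the summable norm $\nabla v\in B^0_{\infty,1}$ is needed rather than merely $\nabla v\in L^\infty$.
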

Secondly, we have the following proposition for the linearised velocity equation. The proof is similar to the proof in \HKR, so we omit the details. We will only apply this proposition with $s=0$.
\begin{prop}\label{prop-linear-velocity-eqn}
    Let $v$ be a smooth divergence-free vector field, $s\in(-1,1)$, and $\rho \in [1,\infty]$. Let $u$ be a smooth solution of the linear system
\[ \partial_t u + v\cdot \nabla u + \Lambda^{2\alpha} u + \nabla p = f,\quad  \nabla \cdot u = 0.\]
Then we have for each $t\in [0,\infty)$, with $s':= s -2\alpha(1-\frac1\rho)$,
\[\|u\|_{L^\infty_t B^s_{2,\infty}}\!\! \le Ce^{C \int_0^t \|\nabla v(\tau)\|_{L^\infty} \dd \tau }\!\left( \|u_0\|_{B^s_{2,\infty}}\!\! + (1+t^{1-1/\rho})\|f\|_{\widetilde L^\rho_t B^{s'}_{2,\infty}} \right).\]
\end{prop}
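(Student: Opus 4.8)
The plan is to run a frequency-localised $L^2$ energy estimate on each dyadic block and then reassemble in $B^s_{2,\infty}$. First I would apply $\Delta_q$ and set $u_q := \Delta_q u$, producing
\[ \partial_t u_q + v\cdot\nabla u_q + \Lambda^{2\alpha} u_q + \nabla\Delta_q p = \Delta_q f - [\Delta_q, v\cdot\nabla] u. \]
Taking the $L^2$ inner product with $u_q$, two terms disappear because of incompressibility: the transport term vanishes since $\int v\cdot\nabla u_q\cdot u_q = -\tfrac12\int (\nabla\cdot v)|u_q|^2 = 0$, and the pressure term vanishes since $\int \nabla\Delta_q p\cdot u_q = -\int \Delta_q p\,(\nabla\cdot u_q) = 0$. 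For the dissipation, Plancherel together with spectral localisation gives $\int \Lambda^{2\alpha}u_q\cdot u_q \ge c\,2^{2\alpha q}\|u_q\|_{L^2}^2$ for $q\ge 0$, so after dividing by $\|u_q\|_{L^2}$ I obtain the scalar differential inequality
\[ \dv{}{t}\|u_q\|_{L^2} + c\,2^{2\alpha q}\|u_q\|_{L^2} \le \|\Delta_q f\|_{L^2} + \|[\Delta_q, v\cdot\nabla] u\|_{L^2}, \]
whose Duhamel form is $\|u_q(t)\|_{L^2}\le e^{-ct2^{2\alpha q}}\|\Delta_q u_0\|_{L^2} + \int_0^t e^{-c(t-\tau)2^{2\alpha q}}\big(\|\Delta_q f(\tau)\|_{L^2} + \|[\Delta_q,v\cdot\nabla]u(\tau)\|_{L^2}\big)\dd\tau$. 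The block $q=-1$ must be treated separately, since there is no spectral gap and the dissipation contributes only a nonnegative term that I would simply discard.

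Second I would estimate the forcing contribution, which is where the shift to $s' = s - 2\alpha(1-\tfrac1\rho)$ and the factor $1+t^{1-1/\rho}$ appear. Multiplying the Duhamel integral by $2^{qs}$ and applying \Holder in time with exponents $\rho,\rho'$ (where $1/\rho'=1-1/\rho$), the kernel contributes $\big(\int_0^t e^{-c\rho'(t-\tau)2^{2\alpha q}}\dd\tau\big)^{1/\rho'}\lesssim 2^{-2\alpha q/\rho'}$ for $q\ge 0$; combined with $2^{qs}$ this produces exactly $2^{q(s-2\alpha/\rho')}\|\Delta_q f\|_{L^\rho_t L^2}=2^{qs'}\|\Delta_q f\|_{L^\rho_t L^2}$, which is $\le\|f\|_{\widetilde L^\rho_t B^{s'}_{2,\infty}}$ by definition of the tilde norm. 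For $q=-1$ the kernel bound degrades to $t^{1/\rho'}=t^{1-1/\rho}$, which is the source of that prefactor.

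Third, the commutator term is the crux. I would invoke the standard transport commutator estimate, valid precisely for $s\in(-1,1)$, namely $\sup_q 2^{qs}\|[\Delta_q,v\cdot\nabla]u\|_{L^2}\lesssim \|\nabla v\|_{L^\infty}\|u\|_{B^s_{2,\infty}}$; this is the reason for the hypothesis on $s$, and is analogous to the commutator lemmas already used in the paper. Bounding the Duhamel kernel crudely by $1$ and integrating, the contribution of this term to $2^{qs}\|u_q(t)\|_{L^2}$ is at most $\int_0^t\|\nabla v(\tau)\|_{L^\infty}\|u(\tau)\|_{B^s_{2,\infty}}\dd\tau$, uniformly in $q$. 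Taking the supremum over $q$ of all three contributions gives
\[ \|u(t)\|_{B^s_{2,\infty}} \le C\|u_0\|_{B^s_{2,\infty}} + C(1+t^{1-1/\rho})\|f\|_{\widetilde L^\rho_t B^{s'}_{2,\infty}} + C\int_0^t \|\nabla v\|_{L^\infty}\|u\|_{B^s_{2,\infty}}\dd\tau, \]
and a Grönwall argument then yields the claimed factor $e^{C\int_0^t\|\nabla v\|_{L^\infty}\dd\tau}$. The main obstacle is the commutator estimate together with the careful time bookkeeping in the tilde space: one must keep the $L^\rho_t$ norm inside the dyadic sum (rather than outside) to land in $\widetilde L^\rho_t B^{s'}_{2,\infty}$ and to extract the correct $2^{-2\alpha q/\rho'}$ gain from the heat-type kernel, and one must verify that the commutator bound genuinely holds up to the endpoint $\ell^\infty$ regularity index for $s$ strictly between $-1$ and $1$.
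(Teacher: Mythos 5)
Your argument is correct and is essentially the proof the paper has in mind: the paper omits the details, deferring to Hmidi--Keraani--Rousset, and their proof is exactly this frequency-localised energy estimate (Plancherel lower bound $c2^{2\alpha q}\|u_q\|_{L^2}^2$ for the dissipation on $q\ge 0$, Duhamel plus \Holder{} in time to produce the shift $s'=s-2\alpha(1-\frac1\rho)$ and the $t^{1-1/\rho}$ factor from the $q=-1$ block, the standard $s\in(-1,1)$ commutator estimate, and Gr\"onwall). Your bookkeeping of the tilde norm and of the low-frequency block matches the intended adaptation from $\alpha=\frac12$ to general $\alpha$.
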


We also need the following estimates adapted from the Appendix of \cite{hmidi2009global}; essentially the same proofs can prove these lemmas, so we will omit them.
\begin{lem}\label{lem-610-2} For every $s\in[-1,0]$, if $v$ is a smooth divergence-free vector field, and $f$ is a smooth function, then
\[ \|v\cdot \nabla f\|_{B^s_{2,\infty}} \lesssim \|f\|_{B^{1+s}_{\infty,\infty}} \|v\|_{B^0_{2,1}} .   \]
\end{lem}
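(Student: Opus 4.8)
The plan is to estimate $v\cdot\nabla f = \sum_i v_i\,\partial_i f$ by Bony's decomposition, writing each $v_i\,\partial_i f = T_{v_i}\partial_i f + T_{\partial_i f}v_i + R(v_i,\partial_i f)$ and summing over $i$. This produces a low--high paraproduct $\mathrm A = \sum_i T_{v_i}\partial_i f$, a high--low paraproduct $\mathrm B = \sum_i T_{\partial_i f}v_i$, and a remainder $\mathrm C = \sum_i R(v_i,\partial_i f)$, which I would bound separately in the target norm $B^s_{2,\infty}$. Throughout I would use $\|\Delta_q\nabla f\|_{L^\infty}\lesssim 2^{-qs}\|f\|_{B^{1+s}_{\infty,\infty}}$ (Bernstein plus the definition of the Besov norm) together with the summability $\sum_q\|\Delta_q v\|_{L^2} = \|v\|_{B^0_{2,1}}$, which in particular bounds $\|S_{q-1}v\|_{L^2}$ uniformly in $q$.

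For $\mathrm A$ only blocks with $q\approx j$ contribute, and the natural estimate $\|\Delta_j\mathrm A\|_{L^2}\lesssim \sum_{|q-j|\le N}\|S_{q-1}v\|_{L^2}\|\Delta_q\nabla f\|_{L^\infty}$, paired with the weight $2^{js}2^{-qs}\approx 1$, gives $\|\mathrm A\|_{B^s_{2,\infty}}\lesssim \|v\|_{B^0_{2,1}}\|f\|_{B^{1+s}_{\infty,\infty}}$ for every $s$. The crucial step is the remainder $\mathrm C$: estimating it directly would demand that the regularities of $v$ and $\nabla f$ sum to something positive, i.e. $0+s>0$, which fails for $s\le 0$. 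I would instead exploit $\nabla\cdot v=0$, using the Leibniz identity $\sum_i\partial_i R(v_i,f) = R(\nabla\cdot v,f) + \sum_i R(v_i,\partial_i f) = \mathrm C$, so that $\mathrm C = \operatorname{div}\big(\sum_i R(v_i,f)\,e_i\big)$. Then $\|\mathrm C\|_{B^s_{2,\infty}}\lesssim \big\|\sum_i R(v_i,f)e_i\big\|_{B^{s+1}_{2,\infty}}$, and the remainder $R(v_i,f)$ now pairs $v_i\in B^0_{2,1}$ with $f\in B^{1+s}_{\infty,\infty}$, whose regularities sum to $s+1\ge 0$; the third index $1$ on $v$ supplies exactly the summability $\sum_{q\ge j-N}\|\Delta_q v\|_{L^2}\le\|v\|_{B^0_{2,1}}$ that closes the estimate uniformly in $j$, including the endpoint $s=-1$ where the weight degenerates to $1$.

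The main obstacle is the high--low term $\mathrm B$, where $\nabla f$ sits at low frequency and $v$ at high frequency. The bound $\|\Delta_j\mathrm B\|_{L^2}\lesssim \sum_{|q-j|\le N}\|S_{q-1}\nabla f\|_{L^\infty}\|\Delta_q v\|_{L^2}$ forces control of $\|S_{q-1}\nabla f\|_{L^\infty}$. For $s<0$ the cut-off sums geometrically, $\|S_{q-1}\nabla f\|_{L^\infty}\lesssim 2^{-qs}\|f\|_{B^{1+s}_{\infty,\infty}}$, and the weight again balances to give the claim via the finite-band structure $\sum_{|q-j|\le N}\|\Delta_q v\|_{L^2}\le\|v\|_{B^0_{2,1}}$. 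The delicate point is the endpoint $s=0$, where only the logarithmically growing bound $\|S_{q-1}\nabla f\|_{L^\infty}\lesssim q\,\|f\|_{B^1_{\infty,\infty}}$ is available; I expect to close this by the refined argument of the appendix of \cite{hmidi2009global} (exploiting the identity $\mathrm B = \operatorname{div}(T_f v)$, which again converts the derivative into regularity of $v$) or by interpolation from $s<0$, so that this single high--low interaction is where essentially all the difficulty of the lemma is concentrated.
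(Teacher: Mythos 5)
Your Bony decomposition, your estimate of the low--high term $\mathrm A$, your treatment of the remainder $\mathrm C$ via $\mathrm C=\operatorname{div}\big(\sum_i R(v_i,f)e_i\big)$ (using $\nabla\cdot v=0$, with the $\ell^1$ summability of $\|\Delta_q v\|_{L^2}$ covering the endpoint $s=-1$), and your bound for the high--low term $\mathrm B$ when $s<0$ are all correct. The paper itself gives no proof of this lemma (it defers to the appendix of \cite{hmidi2009global}), so there is nothing to compare routes against; but you have correctly located the difficulty at $s=0$, and the problem is that neither of your proposed repairs works there. The identity $\mathrm B=\operatorname{div}(T_f v)$ costs a full derivative, so you would need $\|T_f v\|_{B^{1}_{2,\infty}}$; since at $s=0$ one only has $f\in B^1_{\infty,\infty}\subset L^\infty$, the paraproduct estimate gives $\|\Delta_j T_f v\|_{L^2}\lesssim\|f\|_{L^\infty}\|\Delta_j v\|_{L^2}$, i.e.\ $T_f v$ inherits the $B^0$ regularity of $v$ and no factor $2^{-j}$ appears --- unlike in the remainder, where $\|\tilde\Delta_q f\|_{L^\infty}\lesssim 2^{-q}\|f\|_{B^1_{\infty,\infty}}$ supplies exactly that factor because the frequency of $f$ is comparable to the output frequency. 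Interpolation cannot reach the endpoint either: your $s<0$ bound carries a constant of order $1/|s|$ from summing $\sum_{p\le q}2^{-ps}$, so there is no uniform estimate to pass to the limit $s\to 0^-$.

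In fact no argument can close this gap, because the $s=0$ case is false as stated with $\|f\|_{B^{1}_{\infty,\infty}}$. Take $f=\sum_{p=10}^{M}2^{-p}\sin(2^p x_1)\,\psi(x)$ for a fixed bump $\psi$; then $\|f\|_{B^1_{\infty,\infty}}\lesssim 1$ uniformly in $M$, while $S_{M}\partial_1 f\approx\sum_{p\le M}\cos(2^p x_1)\psi$. Pair this with the divergence-free field $v=\nabla^\perp\big(2^{-N}\sin(2^N x_2)\phi\big)$, $N=M+10$, which satisfies $\|v\|_{B^0_{2,1}}\lesssim 1$ and is spectrally concentrated at $|\xi|\approx 2^N$. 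Up to errors of size $O(1)$ in $L^2$, the product $v\cdot\nabla f$ equals $-\cos(2^N x_2)\phi\psi\sum_{p\le M}\cos(2^p x_1)$, which is spectrally localised in an annulus $|\xi|\approx 2^N$ and, by almost-orthogonality of the lacunary frequencies, has $L^2$ norm $\approx\sqrt{M}$; hence $\|v\cdot\nabla f\|_{B^0_{2,\infty}}\gtrsim\sqrt{M}$ while the right-hand side stays $O(1)$. The usable endpoint statement replaces $\|f\|_{B^{1+s}_{\infty,\infty}}$ at $s=0$ by $\|\nabla f\|_{L^\infty}$ or $\|f\|_{B^1_{\infty,1}}$, which removes the logarithmic loss in $\|S_{q-1}\nabla f\|_{L^\infty}$ and makes your argument for $\mathrm B$ go through verbatim; this stronger norm is available in the only place the paper invokes $s=0$, namely \eqref{eqn-application-prop63}, where $f=u^1\in L^1_t B^1_{\infty,1}$.
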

\begin{lem}\label{lem-log-interpol}
    If $v\in H^1$, then
$    \|v\|_{B^0_{2,1}}  \lesssim \|v\|_{B^0_{2,\infty}} \log\left(e + \frac{\|v\|_{H^1}}{\|v\|_{B^0_{2,\infty}} } \right) $.
\end{lem}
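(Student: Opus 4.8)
The plan is to prove this by the standard dyadic splitting argument for logarithmic interpolation inequalities. Writing the three norms in terms of Littlewood--Paley blocks, we have $\|v\|_{B^0_{2,1}} = \sum_{q\ge -1}\|\Delta_q v\|_{L^2}$, $\|v\|_{B^0_{2,\infty}} = \sup_{q\ge -1}\|\Delta_q v\|_{L^2}$, and (using $H^1 = B^1_{2,2}$) $\|v\|_{H^1}^2 \sim \sum_{q\ge -1} 2^{2q}\|\Delta_q v\|_{L^2}^2$. The idea is to split the summation defining $\|v\|_{B^0_{2,1}}$ at some integer threshold $N\ge 0$ to be chosen, controlling the low frequencies by the $B^0_{2,\infty}$ norm and the high frequencies by the $H^1$ norm.

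For the low-frequency part I would simply bound each block by the supremum, giving $\sum_{-1\le q\le N}\|\Delta_q v\|_{L^2} \le (N+2)\|v\|_{B^0_{2,\infty}}$. For the high-frequency part I would insert the factor $2^{-q}2^{q}$ and apply Cauchy--Schwarz in $q$:
\begin{align}
    \sum_{q> N} \|\Delta_q v\|_{L^2} = \sum_{q>N} 2^{-q}\big(2^q\|\Delta_q v\|_{L^2}\big) \le \Big(\sum_{q>N} 2^{-2q}\Big)^{1/2} \Big(\sum_{q>N} 2^{2q}\|\Delta_q v\|_{L^2}^2\Big)^{1/2} \lesssim 2^{-N}\|v\|_{H^1}.
\end{align}
Combining the two estimates yields $\|v\|_{B^0_{2,1}} \lesssim (N+2)\|v\|_{B^0_{2,\infty}} + 2^{-N}\|v\|_{H^1}$.

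It then remains to optimise the choice of $N$. The two terms balance when $2^N$ is comparable to $\|v\|_{H^1}/\|v\|_{B^0_{2,\infty}}$, so I would take $N$ to be the smallest nonnegative integer with $2^N \ge \|v\|_{H^1}/\|v\|_{B^0_{2,\infty}}$, which makes $N \sim \log_2\big(e + \|v\|_{H^1}/\|v\|_{B^0_{2,\infty}}\big)$. With this choice the high-frequency term is bounded by $\|v\|_{B^0_{2,\infty}}$ and the low-frequency term by a constant multiple of $\log\big(e + \|v\|_{H^1}/\|v\|_{B^0_{2,\infty}}\big)\|v\|_{B^0_{2,\infty}}$, which dominates the former since the logarithm is bounded below; this is exactly the asserted estimate.

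The only point requiring genuine care — and the closest thing to an obstacle in an otherwise routine argument — is the well-definedness and lower bound of the ratio $\|v\|_{H^1}/\|v\|_{B^0_{2,\infty}}$. The trivial embedding $B^1_{2,2}\hookrightarrow B^0_{2,\infty}$ gives $\|v\|_{B^0_{2,\infty}}\lesssim\|v\|_{H^1}$, so this ratio is bounded below by a positive constant; this is precisely what the $+e$ inside the logarithm absorbs, guaranteeing both that $N\ge 0$ is an admissible choice and that the logarithm stays positive. The degenerate case $v=0$ is trivial, and otherwise $\|v\|_{B^0_{2,\infty}}>0$ so the ratio makes sense throughout.
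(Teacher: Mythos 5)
Your proof is correct and is exactly the standard high--low frequency splitting argument that the paper relies on (the paper omits the proof, deferring to the appendix of Hmidi--Keraani--Rousset, where the same dyadic decomposition with an optimised cutoff $N$ is used). Your attention to the lower bound on the ratio $\|v\|_{H^1}/\|v\|_{B^0_{2,\infty}}$, which justifies taking $N\ge 0$ and keeps the logarithm bounded below, is the right point to be careful about.
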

Now we prove the uniqueness of solutions.
\begin{thm}\label{thm-uniqueness}
    Let $\alpha\in(\frac12,1)$. Then the equation \eqref{the-eqn} can have at most one solution pair $(u,\theta)$ (in the sense of Definition \ref{defn-solution}) in the space
    \begin{align}
        \mathcal X_t := ( L^1_t B^1_{\infty,1}\cap L^\infty_t H^1) \times (L^\infty_t B^{-1}_{2,\infty}\cap L_t^1 L^\infty   ).
    \end{align}
\end{thm}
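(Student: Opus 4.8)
The plan is to take two solutions $(u_1,\theta_1)$ and $(u_2,\theta_2)$ lying in $\mathcal X_t$ and sharing the same initial data, to set $\delta u := u_1 - u_2$, $\delta\theta := \theta_1-\theta_2$, $\delta p := p_1-p_2$, and to prove $\delta u\equiv 0$ and $\delta\theta\equiv 0$ by means of the Osgood Lemma \ref{lem-osgood}. Subtracting the two copies of \eqref{the-eqn} and grouping the quadratic terms so that $u_1$ plays the role of the transport field yields
\begin{align}
    \partial_t\delta\theta + u_1\cdot\nabla\delta\theta &= -\,\delta u\cdot\nabla\theta_2,\\
    \partial_t\delta u + u_1\cdot\nabla\delta u + \Lambda^{2\alpha}\delta u + \nabla\delta p &= -\,\delta u\cdot\nabla u_2 + \delta\theta\, e_2,
\end{align}
together with $\nabla\cdot\delta u = 0$ and vanishing data $\delta u|_{t=0} = 0$, $\delta\theta|_{t=0}=0$. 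The whole difficulty is concentrated in the term $\delta u\cdot\nabla\theta_2$: since $\theta_2$ carries no positive regularity, this product cannot be controlled in any positive-index (or even $L^2$-based) norm, which is exactly why I would measure $\delta\theta$ in the negative space $B^{-1}_{2,\infty}$ and $\delta u$ in $B^0_{2,\infty}$ --- the precise spaces handled by Propositions \ref{prop-linear-scalar-estimate} and \ref{prop-linear-velocity-eqn}.

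For the temperature I would apply Proposition \ref{prop-linear-scalar-estimate} with $p=2$; the exponential prefactor is finite because $u_1\in L^1_t B^1_{\infty,1}$, and $\delta\theta_0=0$, so that $\|\delta\theta\|_{L^\infty_t B^{-1}_{2,\infty}} \lesssim_t \int_0^t \|\delta u\cdot\nabla\theta_2\|_{B^{-1}_{2,\infty}}\dd\tau$. Lemma \ref{lem-610-2} at $s=-1$ then gives $\|\delta u\cdot\nabla\theta_2\|_{B^{-1}_{2,\infty}}\lesssim \|\theta_2\|_{B^0_{\infty,\infty}}\|\delta u\|_{B^0_{2,1}}\lesssim\|\theta_2\|_{L^\infty}\|\delta u\|_{B^0_{2,1}}$. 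For the velocity I would apply Proposition \ref{prop-linear-velocity-eqn} with $s=0$ to the two forcing terms separately. The buoyancy term $\delta\theta\, e_2$ I would treat with $\rho=\infty$, so that $s' = -2\alpha\le -1$ and the embedding $B^{-1}_{2,\infty}\hookrightarrow B^{-2\alpha}_{2,\infty}$ (valid exactly because $\alpha\ge\tfrac12$) bounds its contribution by $\|\delta\theta\|_{L^\infty_t B^{-1}_{2,\infty}}$; the term $\delta u\cdot\nabla u_2$ I would treat with $\rho=1$, so that $s'=0$ and Lemma \ref{lem-610-2} at $s=0$ gives $\|\delta u\cdot\nabla u_2\|_{B^0_{2,\infty}}\lesssim\|u_2\|_{B^1_{\infty,1}}\|\delta u\|_{B^0_{2,1}}$, time-integrable since $u_2\in L^1_t B^1_{\infty,1}$. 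Collecting the pieces gives $\|\delta u\|_{L^\infty_t B^0_{2,\infty}} \lesssim_t \int_0^t \|u_2\|_{B^1_{\infty,1}}\|\delta u\|_{B^0_{2,1}}\dd\tau + \|\delta\theta\|_{L^\infty_t B^{-1}_{2,\infty}}$.

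Both estimates feature the factor $\|\delta u\|_{B^0_{2,1}}$, one summability index above the $B^0_{2,\infty}$ that I control; Lemma \ref{lem-log-interpol}, together with the uniform bound $\|\delta u\|_{L^\infty_t H^1}\le M<\infty$ coming from $u_1,u_2\in L^\infty_t H^1$, converts it into $\|\delta u\|_{B^0_{2,1}}\lesssim \mu(\|\delta u\|_{B^0_{2,\infty}})$ with $\mu(x)=x\log(e+1/x)$ (the constant $M$ being absorbed into the implicit constant). Writing $\phi(t) := \|\delta u\|_{L^\infty_t B^0_{2,\infty}} + \|\delta\theta\|_{L^\infty_t B^{-1}_{2,\infty}}$, which is non-decreasing in $t$, and substituting the temperature bound into the velocity bound, I would arrive on any fixed interval $[0,T]$ at
\begin{align}
    \phi(t) \le \int_0^t \gamma(\tau)\,\mu(\phi(\tau))\dd\tau, \qquad \gamma(\tau) := C_T\big(\|u_2(\tau)\|_{B^1_{\infty,1}} + \|\theta_2(\tau)\|_{L^\infty}\big),
\end{align}
where $\gamma\in L^1(0,T)$ precisely because $u_2\in L^1_T B^1_{\infty,1}$ and $\theta_2\in L^1_T L^\infty$. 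Since $a=0$ and $\mu$ is an admissible Osgood modulus ($\int_{0^+}\dd r/\mu(r)=+\infty$), Lemma \ref{lem-osgood} forces $\phi\equiv 0$ on $[0,T]$; as $T$ is arbitrary, $\delta u\equiv 0$ and $\delta\theta\equiv 0$, which is the asserted uniqueness.

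The main obstacle is exactly the derivative loss in $\delta u\cdot\nabla\theta_2$ with $\theta_2$ at minimal regularity: it dictates the entire negative-index functional framework and forces the appearance of a logarithm, hence the use of the Osgood Lemma in place of a plain Gronwall argument. The only other point requiring care is routing the two velocity forcing terms through different exponents $\rho$ in Proposition \ref{prop-linear-velocity-eqn} to respect each term's natural time integrability, together with the observation that the subcriticality $\alpha\ge\tfrac12$ is precisely what renders the buoyancy forcing admissible through $B^{-1}_{2,\infty}\hookrightarrow B^{-2\alpha}_{2,\infty}$.
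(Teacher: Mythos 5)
Your proposal is correct and follows essentially the same route as the paper: the same difference equations (with the roles of the two solutions merely swapped), the same use of Propositions \ref{prop-linear-scalar-estimate} and \ref{prop-linear-velocity-eqn} with $\rho=1$ for the convection forcing and $\rho=\infty$ for the buoyancy forcing via $B^{-1}_{2,\infty}\hookrightarrow B^{-2\alpha}_{2,\infty}$, the same logarithmic interpolation through Lemmas \ref{lem-610-2} and \ref{lem-log-interpol}, and the same conclusion by the Osgood lemma with $\mu(x)=x\log(e+1/x)$ and $a=0$.
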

\begin{proof}
    Let $(u^1,\theta^1)$ and $(u^2,\theta^2)$ be two solutions to \eqref{the-eqn} in the space $\mathcal X_T$. Set $u = u^1 - u^2$ and $\theta = \theta^1 - \theta^2$. Then, they solve
    \begin{align}
        \partial_t u + u^2 \cdot \nabla u + \Lambda^{2\alpha} u + \nabla p &= - u \cdot \nabla u^1 + \theta e_2,
        \\
        \partial_t \theta + u^2 \cdot \nabla \theta &= - u \cdot \nabla \theta^1 ,
    \end{align} 
with zero initial data. For the velocity $u$, we write $u=V_1+V_2$ where $V_i$ solve the equations
\begin{align}
    \partial_t V_1 + u^2 \cdot \nabla V_1 + \Lambda^{2\alpha} V_1 + \nabla p_i &= -u\cdot\nabla u^1,
    \\
    \partial_t V_2 + u^2 \cdot \nabla V_2 + \Lambda^{2\alpha} V_2 + \nabla p_i &= \theta e_2,
\end{align} 
Proposition \ref{prop-linear-velocity-eqn}, first with $\rho=1$ and then with $\rho=\infty$, gives
\begin{align}
    \|V_1\|_{L^\infty_t B^0_{2,\infty} }&\le Ce^{C \int_0^t \| \nabla u^2 (\tau)\|_{L^\infty} \dd \tau } \left( \|u_0\|_{B^0_{2,\infty}} + \|u\cdot \nabla u^1\|_{L^1_tB^0_{2,\infty}} \right) ,
    \\
        \|V_2\|_{L^\infty_t B^0_{2,\infty} }&\le Ce^{C \int_0^t \| \nabla u^2 (\tau)\|_{L^\infty} \dd \tau } \left( \|u_0\|_{B^0_{2,\infty}} + (1+t) \|\theta\|_{L^\infty_t B^{-1}_{2,\infty}} \right) ,
\end{align}
since $ L^\infty_tB^{-1}_{2,\infty}\hookrightarrow L^\infty_tB^{-2\alpha}_{2 ,\infty} =  \widetilde L^\infty_tB^{-2\alpha}_{2 ,\infty} $. Together, we obtain
\begin{align}
    \|u\|_{L^\infty_t B^0_{2,\infty}}
    &\leq  Ce^{C \int_0^t \| \nabla u^2 (\tau)\|_{L^\infty} \dd \tau } \\ &  \times  \left( \|u_0\|_{B^0_{2,\infty}} + \|u\cdot \nabla u^1\|_{L^1_tB^0_{2,\infty}} + (1+t) \|\theta\|_{L^\infty_t B^{-1}_{2,\infty}}  \right). \quad \label{eqn-the-thing-to-bound}
\end{align}
By Lemma \ref{lem-610-2} with $s=0$, we have 
\begin{align}\label{eqn-application-prop63}
    \|u\cdot \nabla u^1\|_{B^0_{2,\infty}} \lesssim \|u^1\|_{B^1_{\infty,\infty}} \|u\|_{B^0_{2,1}}.
\end{align} 
By using the  logarithmic interpolation inequality of Lemma \ref{lem-log-interpol}, we obtain
\begin{align}
    \|u\|_{B^0_{2,1}} &\lesssim \|u\|_{B^0_{2,\infty}} \log\left(e + \frac{1}{\|u\|_{B^0_{2,\infty}} } \right) \log \left(e + \|u\|_{H^1}  \right) .
\end{align}
We have used   $\log(e+ab) \le \log(e+a)\log(e+b)$ which can be proven for instance by using Bernoulli's inequality for  $a,b\ge 0$ in the form $(1+(1+a)b) \le (1+a)^{1+b}$. Writing $\mu(a) := a \log(e+1/a)$, the above inequality can be rewritten as
\begin{align}\label{eqn-log-interpol}
    \|u\|_{B^0_{2,1}} \lesssim   \mu(\|u\|_{B^0_{2,\infty}})\log \left(e + \|u\|_{H^1}  \right) .
\end{align} 
For the temperature, Proposition \ref{prop-linear-scalar-estimate} with $p=2$ gives
\begin{align}\label{eqn-theta-bound}
        \|\theta\|_{L^\infty_tB^{-1}_{2,\infty}}\!\! \le Ce^{ C  \| u^2\|_{L^1_tB^1_{\infty,1}}   } \!\!\left( \|\theta_0\|_{B^{-1}_{2,\infty}}\!\! + \int_0^t\! \|v\cdot\nabla\theta^1\|_{B^{-1}_{2,\infty}} \dd\tau \right).\quad 
        \end{align}
By Lemma \ref{lem-610-2} with $s=-1$, 
\begin{align}
\label{eqn-application-prop63-2}
        \|u\cdot\nabla\theta^1\|_{B^{-1}_{2,\infty}} \le \|\theta^1\|_{B^0_{\infty,\infty}}\| u\|_{B^0_{2,1}} \le \|\theta_0\|_{L^\infty}\|u\|_{B^0_{2,1}}  . 
\end{align}
By combining \eqref{eqn-the-thing-to-bound} with \eqref{eqn-application-prop63}, \eqref{eqn-log-interpol}, \eqref{eqn-theta-bound}, \eqref{eqn-application-prop63-2}, we obtain
\begin{align}
        &\|u\|_{L^\infty_t B^0_{2,\infty}} +  \|\theta\|_{L^\infty_tB^{-1}_{2,\infty}}\\
        &\le 
        Ce^{ C  \| u^2\|_{L^1_tB^1_{\infty,1}}   } \left( \|\theta_0\|_{B^{-1}_{2,\infty}} + \|\theta_0\|_{L^\infty}\int_0^t  \mu(\|u\|_{B^0_{2,\infty}})\log \left(e + \|u\|_{H^1}  \right) \dd\tau  \right) \\
        & + Ce^{C  \| \nabla u^2 (\tau)\|_{L^1_tL^\infty}  }  \Bigg( \|u_0\|_{B^0_{2,\infty}} \!+ \int_0^t \|u^1\|_{B^1_{\infty,\infty}} \mu(\|u\|_{B^0_{2,\infty}})\log \left(e + \|u\|_{H^1}  \right) \dd\tau  \\
        & + C(1+t) e^{C \|u^2\|_{L^1_t B^1_{\infty,1}}} \left[ \|\theta_0\|_{B^{-1}_{2,\infty}} \!+ \|\theta_0\|_{L^\infty} \int_0^t  \mu(\|u\|_{B^0_{2,\infty}})\log \left(e + \|u\|_{H^1}  \right) \dd\tau  \right]\Bigg) \\
        &\le C(1+t) e^{C \|u^2\|_{L^1_t B^{1}_{\infty,1} }}\Bigg( \|u_0\|_{B^0_{2,\infty}} + \|\theta_0\|_{B^{-1}_{2,\infty} } +\log(e+\|u\|_{L^\infty_t H^1})(1+\|\theta_0\|_{L^\infty})\\
        &   \times \int_0^t (1+\|u^1(\tau)\|_{B^1_{\infty,\infty}}) \mu(\|u\|_{L^\infty_\tau B^0_{2,\infty}} + \|\theta\|_{L^\infty_\tau B^{-1}_{2,\infty}} ) \dd\tau   \Bigg).
\end{align}
Setting
\begin{align}
    X(t) &\coloneq \|u\|_{L^\infty_t B^0_{2,\infty}} +  \|\theta\|_{L^\infty_tB^{-1}_{2,\infty}},\\
    f(t) &\coloneq  C(1+t) e^{C \|u^2\|_{L^1_t B^{1}_{\infty,1} }} + \log(e+\|u\|_{L^\infty_t H^1})(1+\|\theta_0\|_{L^\infty}), \\
    g(t) &\coloneq 1+\|u^1(\tau)\|_{B^1_{\infty,\infty}},
\end{align}
We obtain the following integral inequality for $X$:
\begin{align}
    X(t) \le f(t)\left( X(0) + \int_0^t g(\tau) \mu(X(\tau) )\dd \tau  \right).
\end{align}
Since $X(0)=0$, by Lemma \ref{lem-osgood}, $X(t)=0$ for every $t$, which proves the uniqueness of solutions.
\end{proof}

\section{Existence of solutions}
\label{sectionExistence}
We now use the above results to prove our main technical result.
\begin{proof}[Proof of Theorem \ref{main}] The uniqueness is guaranteed by Theorem \ref{thm-uniqueness} so we focus on the existence. Following \cite{chae_nam_1997} (or Chapter 3 of  \cite{majda2002}), if the initial data are smooth, we can obtain smooth solutions in $H^m$ to \eqref{the-eqn}. By the a priori estimate of Proposition \ref{prop-hkr66} and the Beale--Kato--Majda type blow-up criterion \cite{chae_nam_1997}, the solution is globally defined.

Now let $u_0\in H^1\cap \dot{W}^{1,p}$ with $\nabla\cdot u_0 = 0$, for some $p>2$, and $\theta_0 \in L^1\cap L^\infty$. Consider the sequence of initial data $u_{0,n}, \theta_{0,n}$ defined by Littlewood--Paley projections,
\begin{align}
    u_{0,n} &\coloneq S_n u_0, \\
    \theta_{0,n} &\coloneq S_n \theta_0 .
\end{align}
These functions are smooth, so they define smooth solutions $u_n,\theta_n$. Moreover, we have a priori estimates uniform in $n$ that imply for any $\epsilon>0$ and $\rho\in [1,2\wedge \frac p2)$,
\begin{align}
    \theta_n &\in L^\infty_t (L^1\cap L^\infty), \\
    u_n &\in L^\infty_t W^{1,p} \cap L^\rho_t  B^{2\alpha-\epsilon}_{\infty,1}.
\end{align}
The spaces $B^{2\alpha-\epsilon }_{\infty,1} $ and $C^{2\alpha-\epsilon}=B^{2\alpha-\epsilon}_{\infty,\infty}$ are essentially equivalent by standard embeddings of Besov spaces due to the small loss in $\epsilon$, so we can replace one with the other at any point.

  Up to subsequences, we have that $\theta_n$ and $u_n$ converge weakly to functions $\theta$ and $u$. By taking $(S_n u_0 -S_m u_0, S_n\theta_0 - S_m \theta_0)$ as initial data, the proof of Theorem \ref{thm-uniqueness} and Remark \ref{rem-explicit-bd} gives that as soon as 
  \[ \|S_n u_0 -S_m u_0\|_{ B^0_{2,\infty}} +  \|S_n \theta_0 -S_m \theta_0\|_{B^{-1}_{2,\infty}} \le 1/e, \]
  we obtain
\begin{align}
    &\|u_n -u_m\|_{L^\infty_t B^0_{2,\infty}} +  \|\theta_n - \theta_m\|_{L^\infty_TB^{-1}_{2,\infty}}
    \\ &\le F(t,\|S_n u_0 -S_m u_0\|_{ B^0_{2,\infty}} +  \|S_n \theta_0 -S_m \theta_0\|_{B^{-1}_{2,\infty}} ) \label{eqn-decaying-bd},
\end{align} 
for an explicit function $F$ given by \eqref{eqn-explicit-bd} with $F(t,s)\to 0$ as $s\to 0$. Thus $u_m$ is Cauchy and hence strongly convergent to $u$  in $L^\infty_tB^0_{2,\infty}$. Interpolation yields strong convergence of $u_n$ to $u$ in $L^2([0,t]\times \mathbb R^2)$. This implies $u_n\otimes u_n $ to $u\otimes u$ strongly in $L^1([0,t]\times\mathbb R^2)$. Also, since $\theta_n\rightharpoonup\theta$ in $L^2$, the product $u_n\theta_n\rightharpoonup u\theta$ in $L^1$. Thus, all terms in Definition \ref{defn-solution} make sense, and $(u,\theta)$ is a solution to \eqref{the-eqn}.
\end{proof}
Finally, for completeness, we give a standard argument that shows the boundary regularity for the temperature patches is preserved.
\begin{proof}[Proof of Theorem \ref{cor-Boundary-reg-intro-ver}]
Let $X=X(t,\xi)$ denote the flow of the vector field $u \in L^1_t C^{2\alpha'}$, i.e. the solution of
\begin{align}
     X_t(t,\xi)  &= u(X(t,\xi) , t), \\ X(0,\xi) &= \xi.
\end{align}
Taking the gradient in $\xi$ gives
\begin{align}
     (\nabla_\xi X)_t = \nabla u(X,t) \nabla_\xi X,
\end{align}
so that
\[
    \|\nabla_\xi X(t,\xi)\|_{L^\infty_\xi} \leq  \|\nabla_\alpha X(0,\xi)\|_{L^\infty_\xi} e^{\int_0^t \| \nabla u(s)\|_{L^\infty} \dd s},\\
\]
and similarly,
\[
    [\nabla_\xi X(t,\xi) ]_{C^{2\alpha'-1}} \le \|\nabla_\xi X(0,\xi)\|_{C^{2\alpha'-1}}e^{C\int_0^t \| u(s)\|_{C^{2\alpha'}}  \dd s}.
\]
This shows that the flow remains in $ C^{2\alpha'-1}$. To apply this to the boundary of the patch, suppose that at time $t=0$, $\partial D_0$ is parameterised by the curve $\gamma_0 \in C^{2\alpha'}([0,1];\mathbb R^2)$. Then at time $t$, the flow transports it to the curve
\[\gamma(t,s): = X(t,\gamma_0(s)).\]
Then $\partial_s \gamma = \nabla_\xi X(t,\gamma_0(s)) \gamma_0'$, and the \Holder seminorm of $\partial_s \gamma $ is controlled:
\[
    [\partial_s \gamma  ]_{C^{2\alpha'-1}} \le [\nabla_\xi X(t,\gamma_0)]_{C^{2\alpha'-1}} \|\gamma_0'\|_{L^\infty}^2 + \|\nabla_\xi X\|_{L^\infty}[\gamma_0']_{C^{2\alpha'-1}}.
\]
This proves that the \Holder regularity is preserved for all time.
\end{proof}

\section*{Acknowledgements}
Both authors were partially supported by the NSF of China (Grant No. 11771045, 11871087). 
%
\printbibliography
\vspace{1em}

\quote{\footnotesize C. Khor\\ %
  \textsc{Laboratory of Mathematics and Complex Systems (Ministry of Education), School of Mathematical Sciences, Beijing Normal University, Beijing 100875, People's Republic of China. }  \texttt{C.Khor@bnu.edu.cn}\par  
 \par
  \addvspace{\medskipamount}
    X. Xu\\
   \textsc{Laboratory of Mathematics and Complex Systems (Ministry of Education), School of Mathematical Sciences, Beijing Normal University, Beijing 100875, People's Republic of China. } \texttt{xjxu@bnu.edu.cn} 
} 
\end{document}